\documentclass[11pt]{amsart}
\usepackage{bm,amssymb,amsmath,amsthm,
comment, mathtools,color,stmaryrd, latexsym,graphicx,bm}
\usepackage{amsrefs}

\usepackage{pgf,tikz,tikz-cd}
\usetikzlibrary{matrix,arrows,positioning, decorations.pathmorphing,shapes.geometric}
\usepackage{amssymb, 
enumerate}

%
%
%
\newtheorem{thm}{Theorem}[section]
\newtheorem{lem}[thm]{Lemma}

\newtheorem{prop}[thm]{Proposition}

\theoremstyle{definition}

\newtheorem{rems}[thm]{Remarks}  

\newtheorem{ex}[thm]{Example}


\usepackage{amsrefs} 

\interfootnotelinepenalty=10000

\newenvironment{newlist}
   {\begin{list}{}{\setlength{\labelsep}{0.25cm}
\setlength{\itemsep}{0.3cm}
\setlength{\topsep}{0.2cm}
                   \setlength{\labelwidth}{0.65cm}
                      \setlength{\leftmargin}{0.9cm}}}
   {\end{list}}
\usepackage{tikz-cd}

\usepackage{mathrsfs} 
\usepackage{bigstrut}
\usepackage{tikz}
\usetikzlibrary{patterns}
\usetikzlibrary{matrix,arrows,decorations,shapes}


\pgfmathsetmacro{\nodebasesize}{1} 
\pgfmathsetmacro{\nodeinnersep}{0.1}





\newcommand{\twiddle}[1]{\smash{\underset{\raise.375ex\hbox{$\smash\sim$}}
       {#1}}\vphantom{\underline{#1}}} 

\newcommand{\newzero}[1]{\smash{\overset{\lower 1ex\hbox{$\smash \tiny{=}$}}
       {#1}}\vphantom{\overline{#1}}}

\newcommand{\newone}[1]{\smash{\overset{\lower.3ex\hbox{$\smash \tiny{=}$}}
       {#1}}\vphantom{\overline{#1}}}




\newcommand{\CM}{\mathcal{M}}

\newcommand{\fntU}{\fnt{U}}

\newcommand{\du}{\smash{\,\cup\kern-0.45em\raisebox{1ex}{$\cdot$}}\,\,}

\newcommand{\pc}{\preccurlyeq}

\newenvironment{widenewlist}
   {\begin{list}{}{\setlength{\labelsep}{0.2cm}
                   \setlength{\labelwidth}{1.6cm}
                      \setlength{\leftmargin}{1.7cm}}}
   {\end{list}}

\newcommand{\pcq}{\sqsubseteq}   



\newcommand{\class}[1]{\mathcal{#1}}
\newcommand{\cat}[1]{\boldsymbol{\mathscr{#1}}}  
\newcommand{\alg}[1]{\mathbf{#1}}
\newcommand{\fnt}[1]{\mathsf{#1}}
\newcommand{\E}{\fnt{E}}
\newcommand{\D}{\fnt{D}}
\newcommand{\ope}[1]{\mathbb{#1}}
\newcommand{\defn}[1]{\emph{#1}}

\newcommand{\CCD}{\class{D}}
\newcommand{\CP}{\class{P}}

\newcommand{\fntH}{\fnt{H}}
\newcommand{\fntK}{\fnt{K}}

\newcommand{\dotbigcup}{\overset{\raise.6ex\hbox{$\smash\cdot$}\,}{\smash\bigcup\,}} 

\newcommand{\dotbigcupdisp}{\mbox{\Large{$\textstyle\overset{\raise.65ex\hbox{$\smash\cdot$}}{\smash\bigcup}$}}}


\newcommand{\A}{\alg{A}}
\newcommand{\B}{\alg{B}}
\newcommand{\C}{\alg{C}}
\newcommand{\M}{\alg{M}}
\newcommand{\X}{\alg{X}}
\newcommand{\Y}{\alg{Y}}
\newcommand{\Z}{\alg{Z}}
\renewcommand{\P}{\alg{P}}

\newcommand{\Tp}{\mathcal{T}}

\newcommand{\twoT}{\twiddle{\two}}
\newcommand{\Zed}{\alg{Z}}
\newcommand{\two}{\boldsymbol 2}
\newcommand{\SA}{\cat{S\!A}}

\newcommand{\CA}{\cat{A}}

\newcommand{\Q}{\alg{Q}}

\newcommand{\CX}{\mathcal{X}}

\newcommand{\w}{\omega}
\renewcommand{\epsilon}{\varepsilon}

\newcommand{\CMT}{\twiddle{\CM}}

\newcommand{\stwiddle}[1]{\smash{\underset{\smash{\raise.1ex\hbox{\small$\sim$}}}
                         {\mathbf{#1}}}\vphantom{#1}}
 \font\lmi=cmmi5
 \def\ltail{\raise.25ex\hbox{\lmi\char'076 }\!}
 \def\rtail{\raise.25ex\hbox{\lmi\char'074 }\!}
 
%

\newcommand{\twiddleeven}{
\smash{\underset{\lower.65ex\hbox{$\smash{
\widetilde{\phantom{rr\!\!r}}}
$}
}{
\C_n^{\sigma^\flat}
}}}

\newcommand{\twiddleodd}{
\smash{\underset{\lower.5ex\hbox{$
\smash{\widetilde{\phantom{mm}}}
$}
}{     \C_{n+1}^\sigma
}}}
\newcommand{\twiddleODD}{
\smash{\underset{\lower.5ex\hbox{$\smash{\widetilde{\phantom{mm}}}
$}
}{      \C_{n+1}^{\text{\tiny{End}}}
}}}

\newcommand{\sub}[1]{_{_{\kern-.9pt{\scriptstyle #1}}}}
\newcommand{\medsub}[2]{#1\lower0.6ex\hbox{$\scriptstyle{#2}$}}
\newcommand{\eA}[1]{\medsub e {\kern-0.75pt\A\kern-0.75pt}(#1)}
\newcommand{\esub}[1]{\medsub e {\kern-0.75pt #1 \kern-0.75pt}}
\newcommand{\epsub}[1]{\medsub \varepsilon {\kern-1.25pt #1}}
\newcommand{\esubA}{\medsub e {\kern-0.75pt\A\kern-0.75pt}}

\newcommand{\ksubB}{\medsub k {\kern-0.75pt\B\kern-0.75pt}}

\DeclareMathOperator{\ISP}{\ope{ISP}}
\DeclareMathOperator{\IScPn}{\ope{IS}_c\ope{P}}

 \DeclareMathOperator{\HSP}{\ope{HSP}}

\newcommand{\id}{\mathop{}\mathopen{}\mathrm{id}} 
\DeclareMathOperator{\img}{im}



\newcommand{\three}{\boldsymbol 3}

\newcommand{\zero}{\overline{\boldsymbol 0}}

\newcommand{\one}{\overline{\boldsymbol 1}}
\newcommand{\Klat}{\class{K}lat}
\newcommand{\Kalg}{\class{K}alg}



%



\begin{document}

\title
{Piggybacking over 
unbounded distributive lattices}

%
%
%

\author{Leonardo M. Cabrer}
\email{lmcabrer@yahoo.com.ar}
\address{London, UK}

\author{Hilary A. Priestley}
\email{hap@maths.ox.ac.uk}

\address{Mathematical Institute \\
University of Oxford, UK}

\begin{abstract}
This paper fills a gap in the literature on natural duality
theory. 
It concerns dual representations of categories of  
distributive-lattice-based algebras  in which the lattice reducts are not assumed to have bounds.
  The development of theory
to parallel what is known for the exhaustively-studied bounded case was
initially driven by need.    This arose in connection with a major
investigation of
Sugihara algebras and Sugihara monoids.  
The theorems in this 
paper  apply 
 in a systematic way to a range of examples:
varieties of Sugihara 
 type;  other classes of algebras 
previously treated ad hoc; and 
  further classes as required.  
\end{abstract}
\keywords{Distributive lattice, Priestley duality, natural duality,
 multisorted duality, piggyback method, Sugihara 
algebra}
\subjclass[2010]{Primary: 08C20;
Secondary:  03G25, 06D50}
\maketitle


\section{Introduction}\label{sec:intro}

There is a gap in the literature on duality theory for
distributive-lattice-based algebras.  The present  paper  addresses this.   By doing so, it paves the way to
 applications to a range of classes which   provide algebraic semantics
for certain well-studied propositional logics.    
The development of the theory we present was prompted by a study by the authors
of Sugihara algebras  and Sugihara monoids \cites{CPsug,CPmult,CPfree}.  These varieties have attracted 
much interest, not least because they provide algebraic
semantics for classes of relevant logics with the mingle axiom;  see for example \cites{Du70,FG19,GM16,OR07}.
Algebras  of Sugihara type have
reducts in the variety $\CCD_u$
of unbounded distributive lattices.  That is, 
bounds~$0$ and~$1$ are not included as constants
in the language.

Duality  methods are  very well established as a tool for studying varieties and quasivarieties of algebras whose members have 
reducts in the variety $\CCD$ of  \emph{bounded} 
distributive lattices, where constants $0$ and~$1$ 
are included.  
Underpinning this successful endeavour have been 
two important dualities:  
Stone duality for Boolean algebras and Priestley duality for  $\CCD$.
These dualities exhibit extremely good behaviour: in the parlance of the now-classic text by Clark and Davey \cite{CD98}*{Chapters 3, 4, 6} they are both `strong' and `good'.   This makes them powerful
tools for translating algebraic problems into more amenable dual formulations and for gaining benefits in terms of computational complexity.  
Extensive catalogues of examples exist of the successful employment of duality methods for classes of algebras with Boolean or $\CCD$ reducts for which well-behaved dualities have  been devised. 
 Hitherto,  the
methodology has not been fully extended 
to cover  classes  with reducts in~$\CCD_u$.    

To provide context
we summarise the approaches adopted for 
classes of
$\CCD$-based
 algebras.  
For a given class
two alternatives 
present themselves:

\begin{enumerate}[(I)]  
\item
setting up a restricted Priestley duality  in which the additional operations are captured, usually by operations or relations, on the dual side;
\item 
 setting up a natural duality which emulates, as far as possible, the desirable
features possessed by the Stone and Priestley  
dualities.
\end{enumerate}
A recent paper \cite{DG17}  formalises~(I) in categorical terms.  For~(II), the key result 
guaranteeing feasibility is the NU Strong Duality
Theorem \cite{CD98}*{7.1.2} (and this does not
require the lattice reducts to be bounded). 
In general, 
 compromises will be unavoidable.  
Under~(I), coproducts, and in particular free algebras,
seldom have simple dual descriptions (whereas the
Stone and Priestley functors convert coproducts into
concrete (that is, cartesian) products).  On the plus side, under~(I),  algebras can be concretely represented in terms of families of sets.

How can (I) and (II) be reconciled so as to capitalise
on the  merits of each?  We highlight three influential 
developments.
The first was  Davey \& Werner's 
(simple) piggyback method which,  for a  quasivariety
$\CA$
with a  forgetful functor~$\fntU$
into a base quasivariety 
$\cat B$ for which an amenable natural duality was to hand,
guided the choice of a 
dualising
 alter ego for~$\CA$.
The method was however limited in scope.  
The next  major advance
 was the introduction by Davey \& Priestley~\cite{DP87} of multisorted natural dualities
and the extension to this setting of piggybacking.  
(The idea of employing  categories of multisorted structures is applicable
well beyond traditional duality theory and shows 
promise for the future---but this does not concern 
us here.)  
Multisorted piggybacking has proved very 
useful when the base variety is~$\CCD$; see for 
example \cites{DP87, CCP, CPcop, DBlat, HPLuk}. 
The third advance to be highlighted relates to that setting specifically, so $\cat \B= \CCD$.  
The present authors  \cite{CPcop}
   showed 
how a multisorted 
piggyback 
duality for a $\CCD$-based
quasivariety $
\CA$
connects in a transparent way to  Priestley 
duality as this applies to the $\CCD$-reducts,
and, under appropriate conditions, to a restricted
Priestley duality for $\CA$ itself.  

Above, 
the restriction to the base variety being 
\emph{bounded} distributive lattices is noteworthy.
Only isolated examples have been considered in the
unbounded case, and general theory has not been available.  We remedy this omission.

Our main results
  are
Theorems~\ref{thm:multpig} and~\ref{thm:RevEng}.
The first of these is our Multisorted Piggyback Duality
Theorem for classes~$\CA$ of $\CCD_u$-based algebras.
 The second relates the natural duality  in Theorem~\ref{thm:multpig} to Priestley duality for 
the $\CCD_u$-reduct of~$\CA$.
Together, these theorems do the same job  in reconciling (I) and~(II)
as  earlier papers do for the bounded case.  The 
theorems  we present find their first applications in \cites{CPmult,CPfree}.  The latter paper, devoted to
free algebras in varieties of Sugihara type, leads to
descriptions of the
underlying lattice structure of such algebras.  This
provides
information which  would  be challenging  
to obtain using  either method~(I) or method~(II) alone.

Theorems~\ref{thm:multpig} and~\ref{thm:RevEng}
 work smoothly.  But
we warn that  en route some 
 subtleties emerge which need
handling with due care; see Section~\ref{sec:JS}.
We believe that in writing this paper 
we do a service to those who potentially have 
uses for duality results in the unbounded case
but who would  baulk  at working out the technicalities  themselves.  
We shall assume some familiarity with duality theory,
as exemplified by Priestley duality for $\CCD$.  
The classic text by Clark \& Davey \cite{CD98} is used as a primary reference for natural duality theory
(our notation does not always align with that in \cite{CD98} but  is internally consistent).
Alternative sources are available:  the authors' cited papers, and Sections~\ref{sec:DL} and~\ref{sec:pigprelim} below, 
outline the material we need.  
We are able to coordinate the  proofs  
of Theorems~\ref{thm:multpig} and~\ref{thm:RevEng}.
 Note that
 the corresponding theorems in the
bounded case were first published in 1987 
\cite{DP87} and 2014 \cite{CPcop}.

In  Section~\ref{sec:exs} we discuss the application of 
our theorems to
\begin{itemize}
\item  {\bf Kleene lattices}

\item  finitely generated varieties of {\bf Sugihara algebras} and {\bf Sugihara monoids}, both odd and even cases,

\item  {\bf unbounded distributive bilattices}
\end{itemize}
 The first and third of these examples have been considered previously, ad hoc, and our  task is to bring them
within the scope of  our general theory.   For varieties
of Sugihara type, our theorems find immediate
application in \cite{CPfree}.
There is a symbiotic relationship between 
Section~\ref{sec:exs} and the theoretical
material in preceding sections. The behaviour of our examples
has motivated the theory and these are used to illustrate features
of it.  In the other direction  
our key
 theorems should be seen as  enablers.
The job  of finding 
the piggyback relations in Theorem~\ref{thm:multpig} and the partial order defined
in Section~\ref{sec:RevEng}  is specific to each application, and can be onerous. We  mention salient 
points only and refer the reader to 
 the appropriate papers for descriptions of the 
resulting dualities.  
 
We  issue one  
claimer. 
Our  Reconciliation Theorem~\ref{thm:RevEng}
describes the Priestley duals of the lattice reducts
of the algebras in the class~$\CA$ under investigation.  In this paper  we do not seek in general to upgrade the
description so as to tie together the natural
duality for~$\CA$ and a restricted Priestley
duality for~$\CA$.  See the remarks  at the end of
Section~\ref{sec:exs}.

Finally, in Section~\ref{sec:onebound}, we outline
the modifications needed to encompass classes of 
algebras with reducts in distributive lattices with
one distinguished bound.  This is appropriate since
we have an application pending.  In connection with
our on-going study of Sugihara algebras and monoids
we wish to apply our methodologies to Brouwerian
algebras  (see \cite{FG19}   for their relevance
and
 \cites{BD76,CPGo} for related material).


\section{Duality  
for unbounded distributive lattices}\label{sec:DL}


We begin by recalling  the  
duality between
 $\CCD_u$  and the category $\CP_u$ of  doubly-pointed Priestley spaces.   
The class~$\CCD_u$  may be defined to be 
the quasivariety (in fact a variety) $\ISP(\two)$,
where $\two$ is the lattice $ (\{ 0, 1\}; \lor, \land)$
in which the underlying order is given by $0 < 1$.
This class is made into a category by taking the morphisms to be all homomorphisms.  On the dual 
side, we take $\twoT$ to be the ordered topological space $(\{0,1\};   \leqslant, \boldsymbol 1,  \boldsymbol 0, \Tp)$, where $\leqslant$ is the partial order for which $0 < 1$ and $\boldsymbol 1$ and $\boldsymbol 0$
are $1$ and $0$ now regarded as nullary operations,
 and~$\Tp$ is the discrete
topology.  Then we can realise the objects of $\CP_u$ as  the class 
$\IScPn (\twoT)$, 
consisting of all isomorphic copies of
closed substructures of non-empty powers of~$\twoT$.  Morphisms are the continuous order-preserving maps which preserve the pointwise liftings
of the nullary operations. 
A self-contained account can be found in \cite{CD98}*{Chapter~1}.

\begin{thm}[Priestley duality for $\CCD_u$] \label{thm:DPu}
There exist well-defined contravariant hom-functors $\fnt{H}_u\colon \CCD_u \to \CP_u$ and $
\fnt{K}_u
\colon  \CP_u \to \CCD_u$ which set up a dual equivalence between~$\CCD_u$ and~$\CP_u$.
The functors are given as follows,  where  $\le$ is to be interpreted as  `regarded as a substructure of'.

\noindent{On objects, }
\abovedisplayskip=0pt
\belowdisplayskip=0pt
\begin{align*}
\forall \B \in \CCD_u \  \bigl( \fnt{H}_u(\B) &= \CCD_u(\B,\two)  \le  \twoT^B
  \bigr),\\
\forall \Y \in \CP_u\  \bigl( \fnt{K}_u(\Y) &= \CP_u(\Y,\twoT)  \le \two^Y\bigr); \\[-3ex]
\end{align*}
\noindent on morphisms
\abovedisplayskip=0pt
\begin{align*}
\forall f  \in \CCD_u (\B,\C) \ \bigl( \fnt{H}_u(f) &= 
- \circ f \bigr), \\
\forall \varphi   \in \CP_u (\Z,\Y) \ \bigl( \fnt{K}_u(\varphi) &= 
- \circ \varphi \bigr)\\
 \shortintertext{(here  $\B,\C \in \CCD_u$ and $\Z,\Y \in \CP_u$).}
&\\[-5ex]
\end{align*}

Moreover, a $\CP_u$-morphism $\varphi$ is surjective if and only if $\fntK_u(\varphi)$ is injective.  
\end{thm}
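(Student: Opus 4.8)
The plan is to prove the two implications separately, after first translating $\fntK_u(\varphi)$ into a concrete operation. Writing $\fntK_u(\varphi)(g) = g \circ \varphi$, I would identify each $g \in \fntK_u(\Y) = \CP_u(\Y,\twoT)$ with the clopen up-set $U_g = g^{-1}(1)$; the requirements $g(\boldsymbol 1_\Y) = 1$ and $g(\boldsymbol 0_\Y)=0$ then say exactly that $U_g$ is a non-empty proper clopen up-set, since (as a member of $\IScPn(\twoT)$) the distinguished points $\boldsymbol 1_\Y, \boldsymbol 0_\Y$ of $\Y$ are forced to be its top and bottom. Under this identification $\fntK_u(\varphi)(g)$ corresponds to $\varphi^{-1}(U_g)$, so $\fntK_u(\varphi)(g) = \fntK_u(\varphi)(h)$ is equivalent to $U_g$ and $U_h$ having the same trace on the image $W := \varphi(Z)$. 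The forward implication is then immediate: if $\varphi$ is surjective then $W = Y$, so $g \circ \varphi = h \circ \varphi$ forces $g = h$ pointwise (any $y \in Y$ equals $\varphi(z)$ for some $z$).

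For the converse I would argue contrapositively: assuming $\varphi$ is not surjective, I produce two distinct members of $\fntK_u(\Y)$ with the same trace on $W$. Here $W$ is a proper subset of $Y$ that is closed (the continuous image of a compact space in a Hausdorff space) and contains both $\boldsymbol 1_\Y = \varphi(\boldsymbol 1_\Z)$ and $\boldsymbol 0_\Y = \varphi(\boldsymbol 0_\Z)$. Fix $y_0 \in Y \setminus W$. The crucial step is a separation argument: for each $w \in W$ we have $y_0 \neq w$, whence $y_0 \not\le w$ or $w \not\le y_0$, and total order-disconnectedness supplies a clopen up-set, respectively a clopen down-set, containing $y_0$ but missing $w$. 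Compactness of $W$ then yields finitely many such sets whose intersection is a clopen set $N = A \cap D \ni y_0$ disjoint from $W$, where $A$ is a clopen up-set and $D$ a clopen down-set.

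I would then take the two clopen up-sets $A$ and $A \setminus D = A \cap (Y \setminus D)$ (both are up-sets, as $Y \setminus D$ is). They differ precisely on $A \cap D = N \ni y_0$, so they are distinct, yet they have equal trace on $W$ because $A \cap D \cap W = N \cap W = \emptyset$. The main obstacle, and the reason for tracking $A$ and $D$ separately, is to certify that both sets define genuine $\CP_u$-morphisms, i.e.\ are non-empty and proper. This is where the placement of the distinguished points does the work: since $N \cap W = \emptyset$ while $\boldsymbol 1_\Y, \boldsymbol 0_\Y \in W$, one cannot have $D = Y$ (else the non-empty up-set $N = A$ would contain its top $\boldsymbol 1_\Y$) nor $A = Y$ (else the non-empty down-set $N = D$ would contain its bottom $\boldsymbol 0_\Y$); from $A \neq Y \neq D$ it follows that $\boldsymbol 1_\Y$ lies in both $A$ and $A \setminus D$ while $\boldsymbol 0_\Y$ lies in neither. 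The two associated maps are therefore distinct morphisms in $\fntK_u(\Y)$ collapsed by $\fntK_u(\varphi)$, so $\fntK_u(\varphi)$ fails to be injective. I expect the separation-plus-compactness step, together with the bookkeeping that keeps the distinguished points on the correct side, to be the only delicate points; everything else is formal.
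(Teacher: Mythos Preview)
Your proof is correct. The paper does not actually prove this theorem: for the dual equivalence it refers to \cite{CD98}*{Subsection~1.2.5}, and for the final assertion it simply says that this ``is a consequence of the duality being strong; see \cite{CD98}*{Chapters~3,~4}. However it is easy to construct a direct proof which bypasses the notion of strongness.'' So the paper invokes the general machinery of strong natural dualities, whereas you supply precisely the kind of elementary direct argument the authors allude to but do not write out.

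The difference in approach is thus: the paper's route (via strongness) is conceptual and uniform---once one knows the duality is strong, the correspondence between epimorphisms on one side and embeddings on the other is automatic---but it requires the reader to absorb the relevant chapters of Clark and Davey. Your route is self-contained and concrete: it works directly with the identification of $\CP_u(\Y,\twoT)$ with non-empty proper clopen up-sets, uses total order-disconnectedness and compactness to separate a missed point $y_0$ from the image $W$ by a clopen set of the form $A\cap D$, and then exploits the fact that $\boldsymbol 1_\Y,\boldsymbol 0_\Y\in W$ to guarantee that both $A$ and $A\setminus D$ are admissible (non-empty and proper). The bookkeeping you flag---showing $A\ne Y\ne D$ so that the distinguished points land correctly---is exactly the point where the doubly-pointed structure of $\CP_u$ does real work, and you handle it cleanly. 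Your argument is the more informative one for a reader who wants to see why the unbounded case behaves well without appealing to the abstract theory.
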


See for example 
 \cite{CD98}*{Subsection 1.2.5} where the result is proved directly and used as an 
appetiser for the general theory developed in subsequent chapters.  The final assertion is needed
in  the proof
of Lemma~\ref{jointsurj}.  It is a
 consequence of the duality being strong; see 
\cite{CD98}*{Chapters 3, 4}.  However it is easy to 
construct a direct proof which bypasses the notion of strongness.  

Note that the distinguished upper and lower bounds in $\fntH_u(\B)$
are given by the constant maps to~$1$ and to~$0$.


We adopt the following notation for the natural evaluation maps:  for $\B\in\CCD_u$ we write
$\ksubB$ for the map $b \mapsto \fntK_u\fntH_u(b)$
($b \in \B$).  We shall make explicit use of the fact  that these evaluation maps are surjective.


\section{Multisorted dualities and piggybacking}
 \label{sec:pigprelim}
In this section we give a summary of the rudiments
of multisorted duality theory and the idea behind the piggybacking method.  Any reader conversant  with
\cite{CD98}*{Chapter~7} or other sources describing 
piggybacking over bounded distributive lattices will find little here that is 
conceptually novel.

We shall, as needed, make use of basic facts and standard notation from universal algebra.  
Usually the classes of algebras we  consider will
be finitely generated varieties which are also
quasivarieties.  
We will be working in a setting in which J\'onsson's  Lemma applies, so that any
finitely generated variety is expressible as 
$\ISP(\CM)$ for some finite set~$\CM$ of finite algebras.

Unless indicated otherwise
we shall restrict attention to a class
$\CA$  
satisfying the following assumptions  (we shall add to this list as we proceed).
\begin{enumerate}
\item[(A1)]  There is a forgetful functor~$\fntU\colon
\CA \to \CCD_u$.

\item[(A2)]  $\CA$ is a finitely generated quasivariety
or variety expressed in the form  $\CA = \ISP(\CM)$,
where $\CM $ is a finite set of
pairwise disjoint (formally,  disjointified)
 finite algebras in $\CA$.
\end{enumerate}
 We refer to the members of $\CM$
as \defn{sorts}.  We may have distinct sorts which are isomorphic.

The definition of 
 a \emph{compatible alter ego $\CMT$ for
$\CM$} can be found in \cite{CD98}*{Section~7.1}   (or see \cite{CPmult}*{Section~3}).  The universe of the alter ego~$\CMT$ is $N: =
\dotbigcup
\{\, M \mid \M \in \CM\,\}$, the union of the universes
of the sorts.  We shall equip 
  the universe~$N$ of $\CMT$ 
 with 
the union topology~$\Tp$ obtained when  each~$M$ (for $\M \in \CM$) is  discretely topologised.
For the purposes of the theory developed in this paper, it will be sufficient to consider an alter ego which takes the form
$\CMT = (N;  G, R \cup S, \Tp)$ where 
\begin{itemize}
\item
$G \subseteq
\bigcup \{\, \CA (\M, \M') \mid \M, \M' \in \CM\,\}$
is a set of  homomorphisms between sorts;
\item  $R$ is a set of relations of arity~$2$,
 each of which is the universe of  a subalgebra of
 some $\M \times \M'$, where
$\M,\M'\in \CM$, and $S$ is a set of unary relations 
each of which is the universe of a $1$-element
subalgebra of some $\M \in \CM$.
\end{itemize}

We refer to \cite{CD98}*{Section~7.1} for a full discussion of how $\CX := 
\IScPn(\CMT)$, the 
 topological quasivariety  generated by $\CMT$, is defined.  
Here we recall only that the objects of~$\CX$ are isomorphic copies of closed substructures of powers of 
$\CMT$ (with a non-empty index set); the key feature is that powers are formed `by sorts'.  A member
$\X$ of $\CX$ is a multisorted structure of the same type as~$\CMT$, and we denote its $M$-sort by
$\X_M$. 
 Members of~$G$ and~$R\cup S$ are lifted pointwise to~$\X$. 
We shall write $r^\X$ for the lifting of a relation~$r$ to~$\X$, and similarly for elements of~$S$ and~$G$.

We then set up  hom-functors 
$\D \colon\CA \to \CX$ and $\E\colon \CX \to\CA$ 
using 
$\CM$ and its alter ego $\CMT$: 
\begin{alignat*}{2} 
                \D (\A)&= \dotbigcup \{\, \CA(\A,\M) \mid
\M 
\in \CM\,\} , \qquad 
&
                \D (f)&  = - \circ f; \\
    \E (\X)& = \CX(\X,\CMT),  &
             \E (\varphi)  &= - \circ \varphi .  
\end{alignat*}
Here the disjoint union of the hom-sets 
is  a 
(necessarily closed) substructure of 
$\dotbigcup \{\, M^A \mid \M \in \CM\,\}$,
 and so a member of $\IScPn(\CMT)$. 
 As a set, 
 $\E (\X) = \CX(\X,\CMT)$
is the collection of continuous structure-preserving maps
 $ \varphi\colon \X \to\CMT$ which are such that 
$ \varphi(\X_M )\subseteq M$ for each sort $
\M$.
This set acquires the structure of a member of 
$\CA$ by virtue of viewing it as a subalgebra of the power 
$\prod \{ \M^{\X_M} \mid \M \in \CM\,\}$.

General theory ensures that for any compatible alter ego these functors are well defined and set up a dual adjunction for which the unit and co-unit maps 
are given by evaluation maps which are embeddings.  
  We have a \emph{duality}
if for each $\A\in \CA $ the evaluation  $\esubA \colon \A \to \E\D(\A)$ is  surjective, and hence an isomorphism. In this situation we shall say that
\emph{the alter ego $\CMT$ dualises $\CA$}.  
A duality is \emph{full}, and so a dual equivalence, if the co-unit maps are also
surjections. 
(Fullness, when needed, is usually  obtained by showing the duality in question
is strong.)

We stress a  point about our objectives.
We are not seeking a duality which is strong (and hence full).  
 Strongness
may be crucial for certain applications.  This is the
case 
when natural duality methods are employed to study
admissible rules for propositional logics, as in
\cites{CFMP,CPsug},  and \cite{CPmult} also exploits
strong dualities. 
In our paper  \cite{CPfree}, on free Sugihara algebras and free Sugihara monoids,
 we do not need strong dualities, and this paper 
does not seek  such dualities.

 We are ready to head for our piggyback duality theorem.  The idea  
is to exploit 
to  maximum advantage the forgetful functor 
 $\fntU \colon \CA \to \CCD_u$ in order to align
as closely as possible the natural dual $\D(\A)$ of each $\A \in \CA$ and the Priestley dual 
$\fntH_u\fntU(\A)$
of the reduct of~$\A$. 
We first establish links between $\CA$ and its 
image under~$\fntU$. 
In particular, for each $\M \in \CM$
 we consider maps in 
$\fntH_u \fntU (\M)$, the first dual of~$\M$
under the duality in Theorem~\ref{thm:DPu}. Maps of this type are central to the piggybacking method.  
We refer to them as \emph{carrier maps}, or
\emph{carriers} for short.
We add a further assumption:
\begin{enumerate}
\item[(A3)] Associated 
to each $\M \in \CM$ is a chosen 
non-empty 
set $\Omega_{\M} \subseteq \fntH_u \fntU (\M)$
of carriers and $\Omega:= \bigcup \Omega_\M$.
\end{enumerate}
Together, (A1)--(A3) may be seen as setting out
the framework within which we shall work, with
$\CA$ and $\fntU$ as givens, and scope to impose
conditions on the
choices of $\CM$ and~$\Omega$ as we proceed;
see Remarks~\ref{rems:JS}. Unless indicated otherwise, (A1)--(A3) are henceforth assumed to hold. 

We observe that, by
changing~$\CCD_u$ to~$\CCD$ in (A1) and making the obvious consequential changes to (A2) and (A3),
we obtain the framework assumptions for piggybacking over~$\CCD$.  In other words, only
the base variety changes.


%

The piggyback method points the way to  
a choice of alter ego $\CMT$ which we hope will  dualise~$\CA$. 
Our strategy builds on 
that of \cite{CD98}*{Theorem 7.2.1}, which originated in \cite{DP87}.  
In both bounded and unbounded  cases it
depends on proving that, for each algebra~$\A\in\CA$, the natural evaluation map
$\esubA \colon \A \to \E\D(\A)$ is surjective.
    To achieve this,
one exploits the surjectivity of the corresponding 
evaluation maps for the base duality, \textit{viz.} that for $\CCD$
or $\CCD_u$, respectively. 

We now provide the key diagrams on which our piggybacking theorem will rest.  These mimic 
the corresponding diagrams for the bounded case
given for the proof of \cite{CD98}*{Theorem~7.2.1}. 
In what follows,  $\A$ is a fixed but arbitrary algebra in $\CA$.
It is a set-theoretic triviality that the diagram in
Figure~\ref{fig:pig}(a)  yields surjectivity of the 
evaluation $\esubA$ once the  diagrams
in Figure~\ref{fig:pig}(b) 
have been constructed so that the map $\Delta$ can be defined.  Here it is essential that $\fntK_u\fntH_u
\fntU(\A) \cong \fntU(\A)$ and this tacitly demands
 that the domain of $\fntK_u$ is the \emph{whole}
of the first dual $\fntH_u\fntU(\A)$.

\begin{figure} [ht]
\begin{center}
\begin{tabular}{c|c}
\vspace*{1cm}
\begin{tikzpicture} 
[auto, 
 text depth=0.25ex,
] 
\matrix[row sep= 1.2cm, column sep= .65cm]
{
\node (A)  {$\A$};
 &&  \node (EDA)  {$\E\D(\A)$};\\
&& \node (KHUA)  {$\fntK_u\fntH_u\fnt{U}(\A)$};\\
};
\draw [->] (A) to node {$e_{\A}$} (EDA);
\draw [->] (EDA) to node  
{\begin{tabular}{l}$\exists\Delta$\\[-1ex]
\tiny{[injective]}
\end{tabular}
} (KHUA);
\draw [->] (A)  to node [swap, yshift=.25cm, xshift=.5cm] 
{\begin{tabular}{r}$k_{\fnt{U}(\A)}$\\[-1ex]
\tiny{[surjective]}
\end{tabular}
} (KHUA);
\end{tikzpicture}  
\hspace*{.3cm} 
  &
\hspace*{.3cm}
\begin{tikzpicture}
[auto, 
 text depth=0.25ex,
] 
\matrix[row sep= .7cm, column sep= .6cm]
{
\node (DAM)  {$\CA(\A,\M)$};  &&  \node (M) {$M$};\\
\node (DuUA2)  {$\CCD_u (\fnt{U}(\A), \two)$}; 
&& \node (twoT)  {$\twoT$};\\
}; 
\draw [->] (DAM) to  node [swap]  {$\Phi_{\w}$} (DuUA2); 
\draw [->] (DAM) to node {$\alpha_M$} (M);
\draw [->] (M)  to node {$\w$}  (twoT);
\draw [->] (DuUA2) to node[swap] {$\Delta(\alpha)$} (twoT);
\end{tikzpicture}
\vspace*{-1.5cm}
\\&\\
 \\
 & (for fixed $\M$ and $\w$)\\[1ex]
(a) & (b)  
\end{tabular}
\end{center}
\caption{Seeking to define $\Delta$  \label{fig:pig}}
\end{figure}

Let us spell out how  Figure~\ref{fig:pig}(b) 
is required to work.
Take 
 $\alpha \in \E\D(\A)$.  That is,
$\alpha$  is a 
multisorted morphism from $\X :=\D(\A)$ to
$\CMT$.  
Thus  $\alpha$ is a sort-preserving, continuous and structure-preserving map.   So  $\alpha_M$, the
$M$-component of~$\alpha$,  maps 
the $M$-sort 
$\X_M$ 
of $\X$ into the $M$-sort 
of $\CMT$.
Figure~\ref{fig:pig}(b) represents 
a family of diagrams, one for each choice of sort $\M$ and each 
carrier $\w \in \Omega_{\M}$.
The map
$\Phi_\w := \w \circ - $ takes the $M$-sort
of $\D(\A)$, that is,  $\CA(\A, \M)$, into 
$\fntH_u \fntU(\A)$, the dual of $\A$'s reduct in
$\CCD_u$ for the base duality as given in 
Theorem~\ref{thm:DPu}. 
Each individual
diagram must  commute, and, jointly,
they must fit together to yield a well-defined
map~$\Delta$.  
For the commutativity we need, for each $x \in \CA(\A,\M)$, 
$$  
\Delta(\alpha)(\w \circ x) = \w (\alpha_{M}(x)).
$$
We shall attempt to use this as a definition, but 
being duly mindful that well-definedness has to be considered.

 Also, as noted above, 
the success of the piggyback strategy relies on 
  $\Delta(\alpha)$
 being defined on the whole of $\fntH_u\fntU(\A)$.
For this we  require
$$
\bigcup \{ \, \img \Phi_\w \mid  \M \in \CM, \
\w \in \Omega_\M \,\}=
\fntH_u\fntU(\A).
$$
This condition is known as \emph{joint surjectivity}.

In the
bounded case the corresponding requirement  is shown
to be equivalent to a separation condition involving
the sorts, the carriers and 
the endomorphisms of sorts and
the homomorphisms  between them  \cite{CD98}*{Lemma~7.2.2}.
It turns out that in the unbounded case obtaining 
conditions for joint surjectivity is more delicate.
We devote the next section to these 
issues.

\section{Joint surjectivity and separation}\label{sec:JS}

In this short section we deal with technical matters.  It might be tempting to  
think that the unbounded case would involve making 
only routine  amendments to  results for piggybacking over $\CCD$.  
This would be misguided.

We stress once again §that it will be
crucial for the proof of Theorem~\ref{thm:multpig}
that, for each $\A \in \CA$, 
the family of maps $\{\,\Phi_{\w}\mid \w\in\Omega\,\}$ is jointly surjective,   that is, for each $y\in \CCD_u (\fnt{U}(\A),\two)$ there exist $\M\in\CM$, $\w\in \Omega_{\M}$ and $x\in \CA(\A,\M)$ such that $y=
\Phi_\w (x) := \w\circ x$.
We seek 
conditions for this to hold.  These should be 
global, meaning that they should not involve  
arbitrary algebras~$\A$. Lemmas~\ref{jointsurj}
and~\ref{lem:fullJS-triv} together achieve our
objective.  The first of these is an adaptation of
\cite{CD98}*{Lemma~7.2.2}.
Our framework assumptions (A1)--(A3) from the
previous section are still in force, but for ease of
reference we include  these in the statements of our lemmas.

We introduce the following separation condition, for 
given~$\CM$ and~$\Omega$:

\begin{widenewlist}
\item[$\text{Sep}_{\CM,
\Omega}$]
 For all $\M\in \CM$, given 
$a,b\in \M$  with $a \ne b$,  then either 
there exists $\w \in \Omega_{\M}$ such that 
$\w(a) \ne \w(b)$ or there  exist
$\M' \in \CM$,  $
\w'
\in \Omega_{\M'}$ and $u\in \CA(\M , \M')$ 
    such that 
$\w' (u(a))  \ne \w' (u(b))  
$.  
\end{widenewlist}
Later we shall consider a more refined separation condition 
which parallels that used in \cite{CD98}*{Lemma~7.2.2}, but
we do not need that yet.

Let $\A \in \CA$.
Note that $\CCD_u (\fnt{U}(\A),\two) $ will always
contain the constant maps from $\fntU(\A)$ to $\two$. We denote these by 
 $\zero$ and $\one$; the domain~$\A$ will be dictated by the context.

\begin{lem}[Restricted Joint Surjectivity]
\label{jointsurj}
Assume \text{\rm (A1)}--\text{\rm (A3)}.
Then the following
conditions    
    are equivalent.  
\begin{newlist}
\item[{\rm (1)}]  $\text{\rm Sep}_{\CM, \Omega}$  
is satisfied;
\item[{\rm (2)}] for every $\A\in\CA$ and  every $a, b \in \A$ with $a \ne b$ there exist $\M \in \CM$, $\w \in \Omega_{\M}$ and $x \in \CA(\A,\M)$ such that 
$\w(x(a)) \ne \w(x(b))$;
\item[{\rm (3)}]   for every $\A\in\CA$ and each $y \in \fnt H_u\fnt U(\A)
\setminus
\{\zero, \one
\}$ 
there exist
$\M \in \CM$, $\w \in \Omega_{\M}$ and $x \in  \CA(\A,\M)$ such that
$y = \w \circ x$.
\end{newlist} 
\end{lem}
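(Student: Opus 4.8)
The plan is to establish the cycle $(1) \Rightarrow (2) \Rightarrow (3) \Rightarrow (1)$. The first and last implications are short, and essentially all the content lies in $(2) \Rightarrow (3)$.

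For $(1) \Rightarrow (2)$, I would use assumption (A2), namely $\CA = \ISP(\CM)$, to realise a given $\A$ as a subalgebra of a product $\prod_{i\in I}\M_i$ of sorts, with the coordinate projections $\pi_i\colon \A \to \M_i$ being $\CA$-homomorphisms. Given $a \neq b$ in~$\A$, they differ in some coordinate~$i$, say $\pi_i(a) \neq \pi_i(b)$ in the sort $\M := \M_i$. Applying $\text{Sep}_{\CM,\Omega}$ to this pair yields either a carrier $\w \in \Omega_\M$ separating them directly, in which case $x := \pi_i$ does the job, or a second sort $\M'$, a carrier $\w' \in \Omega_{\M'}$ and $u \in \CA(\M,\M')$ with $\w'(u(\pi_i(a))) \neq \w'(u(\pi_i(b)))$, in which case $x := u \circ \pi_i \in \CA(\A,\M')$ works. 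Either way (2) holds.

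For the core implication $(2) \Rightarrow (3)$, fix $\A$ and put $X := \fntH_u\fntU(\A)$, a doubly-pointed Priestley space whose distinguished bottom and top are the constant maps $\zero$ and~$\one$. Set
\[
 Y := \bigcup \{\, \img \Phi_\w \mid \M \in \CM,\ \w \in \Omega_\M \,\} \subseteq X,
 \qquad Y^+ := Y \cup \{\zero,\one\}.
\]
The first step is to check that $Y^+$ is a closed substructure of~$X$. Each hom-set $\CA(\A,\M)$ is a closed subset of the compact space $M^A$, hence compact, and $\Phi_\w = \w \circ -$ is continuous, so each $\img\Phi_\w$ is compact and therefore closed in the Hausdorff space~$X$. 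As there are only finitely many sorts and each $\fntH_u\fntU(\M)$, hence each $\Omega_\M$, is finite, $Y$ is a finite union of closed sets and so is closed; adjoining the two points $\zero,\one$ keeps it closed. Since the only operations carried by $\twoT$ are the two nullary ones picking out $\zero$ and~$\one$, both of which lie in $Y^+$ by construction, $Y^+$ is indeed a substructure, and the inclusion $\iota\colon Y^+ \hookrightarrow X$ is a $\CP_u$-morphism. The second step is a non-injectivity argument: suppose $Y^+ \neq X$, so $\iota$ is not surjective. By the final assertion of Theorem~\ref{thm:DPu}, $\fntK_u(\iota)$ is then not injective. Now $\fntK_u(\iota)$ is restriction to $Y^+$, and under the evaluation isomorphism $k_{\fntU(\A)}\colon \fntU(\A) \to \fntK_u(X)$ (an isomorphism by Theorem~\ref{thm:DPu}) the elements of $\fntK_u(X)$ are the evaluation maps $y \mapsto y(a)$ for $a \in \A$. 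Non-injectivity thus produces $a \neq b$ in $\A$ with $y(a) = y(b)$ for all $y \in Y^+ \supseteq Y$, and in particular $\w(x(a)) = \w(x(b))$ for all $\M$, all $\w \in \Omega_\M$ and all $x \in \CA(\A,\M)$. This contradicts~(2). Hence $Y^+ = X$, so every $y \in X \setminus \{\zero,\one\}$ lies in~$Y$, which is precisely~(3).

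Finally, $(3) \Rightarrow (1)$: since $\fntU(\A) \in \CCD_u = \ISP(\two)$, the maps in $\fntH_u\fntU(\A)$ separate the points of~$\A$; any map separating $a \neq b$ is non-constant, hence by~(3) has the form $\w \circ x$, yielding~(2). Specialising~(2) to $\A = \M$ a sort, the homomorphism $x$ and carrier $\w$ it supplies are exactly what $\text{Sep}_{\CM,\Omega}$ asks for, with the case $x = \id_\M$ recovering the direct alternative; so~(1) follows. The \emph{main obstacle} is the two-part argument in $(2) \Rightarrow (3)$: verifying that $Y^+$ is a genuine closed substructure (where compactness together with the finiteness of~$\CM$ and of each~$\Omega_\M$ does the work) and, above all, keeping careful track of the constant maps $\zero,\one$. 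It is precisely their presence in the unbounded setting---absent in the bounded case, where every homomorphism to~$\two$ preserves the bounds and so cannot be constant---that forces the restriction to $X \setminus \{\zero,\one\}$ and accounts for the word ``Restricted'' in the statement.
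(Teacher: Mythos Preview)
Your proof is correct and follows essentially the same route as the paper's: both arguments hinge on forming the closed substructure $Y^+ = \bigcup_{\w\in\Omega}\img\Phi_\w \cup \{\zero,\one\}$ of $\fntH_u\fntU(\A)$ and invoking the surjectivity--injectivity correspondence from Theorem~\ref{thm:DPu} to pass between~(2) and~(3), while $(1)\Leftrightarrow(2)$ comes from point-separation in $\ISP(\CM)$ and specialisation to $\A=\M$. The only differences are organisational: the paper proves $(2)\Leftrightarrow(3)$ and $(1)\Leftrightarrow(2)$ as two biconditionals (dismissing $(2)\Rightarrow(1)$ as ``easy to check''), whereas you run the cycle $(1)\Rightarrow(2)\Rightarrow(3)\Rightarrow(1)$ and give more topological detail on why $Y^+$ is closed.
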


\begin{proof}  For a given~$\A$,
consider 
	$Z:= \dotbigcup \{\, \img \Phi_\w \mid \w \in \Omega\,\}\cup\{
\zero, \one
\}$.  Since~$\Omega$ is finite,
$Z $ is a closed substructure of $ \fntH_u\fnt{U}(\A)$.  Let $\mu \colon Z \to \fntH_u\fnt{U}(\A)$
be the inclusion map. Then
(3) is satisfied  if and only if $\mu$ is surjective. 
The final  statement in Theorem~\ref{thm:DPu}
 implies that this holds 
 if and only if  $\fnt{K}_u(\mu)$ is injective,  which in turn holds   if and only if  $\fnt{K}_u(\mu)\circ {\medsub k {\kern-0.75pt\fnt{U}(\A)\kern-0.75pt}}  \colon  \fnt{U}(\A ) \to  
\fntK_u (Z)$  is 
injective.  This is exactly what~(2) asserts.  
 We have proved the equivalence of (2) and~(3).

Now assume~(1).  Since $\CA = \ISP(\CM)$, 
the homomorphisms from any given $\A 
\in \CA$ into the members of 
$\CM$ separate the points of~$\A$. It follows that for any $a \ne b$ in $\A$ there exist $\M \in \CM$ and $x \in \CA(\A,
\M)$ such that  $x(a) \ne x(b)$.  By~(1), either 
there exists $\w \in \Omega_\M$ such that 
$\w(x(a)) \ne \w (x(b))$ or there exists 
$\M'\in\CM$, $\w' \in \Omega_{\M'}$  and a homomorphism  $u \in \CA(\M,\M')$ for which 
 $\w' (u(x(a))) \ne 
\w'( u(x(b)))$.  This implies that~(2) holds.  
The converse is easy to check.
\end{proof}


We now 
present conditions which
ensure that, for any $\A \in \CA$, each of the `missing' constant maps can be 
represented in the form $\Phi_\w (x) = \w \circ x$.

\begin{lem}  \label{lem:fullJS-triv}
Assume \text{\rm (A1)}--\text{\rm (A3)}. 
Then the following are equivalent:
\begin{newlist}
\item[{\rm (1)}]  
there exist a sort 
$\M_1\in\CM$, an element $d_1\in \M_1$, and a carrier  $\w \in \Omega_{\M_1}$ such that $\{d_1\}$ is a subalgebra of $\M_1$ and $\w(d_1)=1$;
\item[{\rm (2)}]   for every $\A\in\CA$  there exist
$\M\in \CM$, $\w \in \Omega_{\M}$ and $x \in  \CA(\A,\M)$ such that
$ \w \circ x=\one \in \fnt H_u\fnt U(\A)$.
 \end{newlist}

A corresponding 
 statement can be obtained for the map $\zero$ by replacing $1$ by $0$ in {\rm (1)} and {\rm (2)} above.
\end{lem}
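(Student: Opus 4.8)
The plan is to establish the two implications separately and then read off the $\zero$-version by a verbatim symmetry argument (interchanging $1$ with $0$, and $\one$ with $\zero$). The organising observation is that a one-element subalgebra of a sort $\M$ is nothing but the image of a homomorphism from the trivial algebra $\mathbf 1$ into $\M$; since $\CA=\ISP(\CM)$, the trivial algebra $\mathbf 1$ lies in $\CA$. From this viewpoint condition~(1) is precisely the instance of condition~(2) obtained by taking $\A=\mathbf 1$, and the whole content of the lemma is that this single instance already forces the general statement.

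For (1)$\Rightarrow$(2), given an arbitrary $\A\in\CA$ I would use the constant map $\kappa\colon\A\to\M_1$ defined by $\kappa(a)=d_1$ for every $a\in\A$. Because $\{d_1\}$ is a subalgebra of $\M_1$, each fundamental operation of $\M_1$ fixes $d_1$, so $\kappa$ preserves all operations and is therefore a genuine morphism $\kappa\in\CA(\A,\M_1)$. Taking $\M=\M_1$, the carrier $\w\in\Omega_{\M_1}$ supplied by~(1), and $x=\kappa$, the composite $\w\circ\kappa$ sends every $a\in\A$ to $\w(d_1)=1$; hence $\w\circ x=\one$ in $\fntH_u\fntU(\A)$, which is exactly~(2). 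The only point needing verification here is that $\kappa$ really is an $\CA$-homomorphism, and this is precisely what the hypothesis "$\{d_1\}$ is a subalgebra" delivers.

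For (2)$\Rightarrow$(1), I would simply apply~(2) to $\A=\mathbf 1$. This produces a sort $\M\in\CM$, a carrier $\w\in\Omega_{\M}$, and a homomorphism $x\in\CA(\mathbf 1,\M)$ with $\w\circ x=\one$. The image $x(\mathbf 1)$ is a one-element subalgebra of $\M$; setting $\M_1:=\M$ and letting $d_1$ be the unique element of $x(\mathbf 1)$, the singleton $\{d_1\}$ is a subalgebra of $\M_1$, and evaluating $\w\circ x=\one$ at the unique element of $\mathbf 1$ yields $\w(d_1)=1$. This is exactly assertion~(1).

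The step most deserving of care — the genuine obstacle — is the status of the trivial algebra: the entire force of the implication (2)$\Rightarrow$(1) rests on having $\mathbf 1\in\CA$, so that~(2) may legitimately be instantiated there. Under the standard convention that $\ISP$ is formed using possibly empty products, $\mathbf 1$ is always available, and then no sort-by-sort analysis is required: assuming~(2), the required $x\in\CA(\mathbf 1,\M)$ exists outright, and the one-element image does the rest. Finally, the $\zero$-statement follows by repeating both implications word for word with $1$ replaced by $0$ and $\one$ replaced by $\zero$; since $\zero,\one\in\CCD_u(\fntU(\A),\two)$ enter the framework on an equal footing, nothing in the reasoning distinguishes the two bounds.
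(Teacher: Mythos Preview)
Your proof is correct and follows essentially the same approach as the paper: for (1)$\Rightarrow$(2) you use the constant map into the one-element subalgebra (the paper phrases this as factoring through the unique map into a one-element algebra), and for (2)$\Rightarrow$(1) you instantiate~(2) at a trivial algebra, exactly as the paper does. Your write-up is in fact slightly more explicit than the paper's on both directions, and your remark that $\mathbf 1\in\CA$ because $\ISP$ admits the empty product pinpoints the one thing that needs checking.
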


\begin{proof}  Since every algebra $\A\in \CA$ has a unique map into any $1$-element algebra in $\CA$, it is straightforward that (1) implies (2).

For the converse,  consider a trivial ($1$-element)
algebra $\B$ in $\CA$.  By (2), there exist $\M_1\in \CM$, $\w \in \Omega_{\M_1}$ and $x \in  \CA(\B,\M_1)$ such that
$\w \circ x=\one\in  \fnt H_u\fnt U(\B)$. Hence the image of $x$ is a $1$-element subalgebra $\{d_1\}$ of $\M_1$ and 
$\w(x(\B))=\w(\{d_1\}  )=\{1\}=\one(\B)$, 
which concludes the proof.
\end{proof}

\begin{rems}[Joint surjectivity and ways to achieve it]
\label{rems:JS}
{\rm 
We assume that $\CA$ is given and (A1) holds.
We discuss how we might vary $\CM$ and 
$\Omega$, at the same time ensuring 
 that 
joint  surjectivity holds.
We have seen that restricted joint surjectivity is equivalent  to the separation condition 
$\text{Sep}_{\CM, \Omega}$ and that joint surjectivity
holds if in addition there exist $1$-element subalgebras as demanded in Lemma~\ref{lem:fullJS-triv}.

When the piggyback method was first devised the 
aim was to obtain workable dualities in circumstances
where, for example, use of the NU Duality Theorem, 
aided by hand calculations to streamline the resulting
alter ego, was  not feasible.  The quest for simple alter egos can be seen as part of the
philosophy behind piggybacking  and this has 
influenced the formulations of piggyback theorems in the past.  This accounts in particular
for the set $G$ of operations in an alter ego  often being reduced as much as possible.

Our perspective  
is a little different.  We are interested in choosing a dualising alter ego which
makes Theorem~\ref{thm:RevEng} work as transparently as possible.  We shall postpone discussion of how this relates to  the admissible choices of $\CMT$;  see Remarks~\ref{rems:newCMT} and the examples in Section~\ref{sec:exs}.
But 
it is appropriate already here to
 consider  separation alongside options for
varying $\CM$ and $\Omega$ which   explicitly take
account of  homomorphisms between sorts. 
For a given 
subset
$G$ of
$\bigcup \{\, \CA(\M,\M')\mid \M, \M' \in \CM \,\}$ 
 we
consider 
 the condition:

\begin{widenewlist}
\item[$\text{Sep}_{\CM,G,
\Omega}$]
 For all $\M\in \CM$, given 
$a,b\in \M$  with $a \ne b$,  then either 
there exists $\w \in \Omega_{\M}$ such that
$\w(a) \ne \w(b)$ or there  exist
$\M' \in \CM$,  $
\w'
\in \Omega_{\M'}$ and $u\in \CA(\M , \M')\cap G$ 
   such that 
$\w' (u(a))  \ne \w' (u(b))  
$.  
\end{widenewlist}
For any choice of~$G$ 
obviously  $\text{Sep}_{\CM, G, \Omega}$ implies
$\text{Sep}_{\CM, \Omega}$  and hence implies 
restricted joint surjectivity.


We may  explore  the interplay between
the sets which feature  in $\text{Sep}_{\CM, G, \Omega}$, and possible tradeoffs.
Loosely,
the larger $G$ is, the fewer
sorts and/or carriers we are likely to need. 
At the  extremes,  we may seek to 
minimise the number of sorts,
 allowing multiple carrier maps on the sorts as needed, or to
maximise the
number of sorts and minimise the number of carrier maps on each.

Taking~$\CM$ as
 given,  
we can always satisfy
$\text{Sep}_{\CM,\Omega}$  by letting
$\Omega_{\M} $ contain all non-constant
maps in $\CCD_u(\fnt U(\M), \two)$ for each $\M \in \CM$.  However our aim will often be to use as few 
sorts as possible, so that it is expedient to include
endomorphisms in the alter ego  insofar as these are available.

Of  course, the choice of $\CM$ will be constrained
by  the need for the class $\CA$ we wish to study
to he expressible as $\ISP(\CM)$, for   which it is 
necessary  and sufficient that the homomorphisms from any $\A \in \CA$ into the members of $\CM$ separate the points
of $\A$. 
Regarding the choice of~$\CM$ we 
make some technical comments to  justify 
the  assumption in (A3) that
each  sort $\M \in \CM$ has a 
\emph{non-empty} set
of carriers.  Assume that $\text{Sep}_{\CM,G,\Omega}$ holds.
Suppose
$\M \in \CM$ were  such that $\Omega_{\M} =\emptyset$. Let $\CM^*= \CM \setminus \{\M\}$
and let  $G^*$ be obtained from~$G$ by deleting all maps which have $\M$ as their domain or codomain.  Then 
$\text{Sep}_{\CM^*, G^*,\Omega}$
holds.  We claim that $\CA= \ISP(\CM^*)$.  
Take
 $a \ne b$ in~$\M$.  Then $\text{Sep}_{\CM,G,\Omega}$ implies 
there exist  $\M' \in \CM^*$,  $u \in \CA(\M,\M')$  and $\w' \in \Omega_{\M'}$ such that 
$\w'(u(a)) \ne \w'(u(b))$.  Then $u(a) \ne u(b)$. 
Hence $\M \in \ISP(\CM^*)$.  Finally,
$\CA= \ISP(\CM) =  \ISP(\CM^*)$, as claimed. We
deduce that 
 $\M$
can be deleted from $\CM$.  

We conclude these remarks with comments 
stemming from Lemma~\ref{lem:fullJS-triv}. 
Typically,
the sorts we choose to satisfy (A2), (A3) and 
$\text{Sep}_{\CM.G,\Omega}$
will provide the required 
$1$-element subalgebras and  carrier maps. 
Failing this, we
can add trivial algebras to $\CM$ and  corresponding maps  to $\Omega$; this will leave
$\ISP(\CM)$ unchanged.
It might appear from its proof that 
Lemma~\ref{lem:fullJS-triv} is concerned with handling a degenerate case.  
However we shall
see in Example~\ref{ex:sug} that  
condition~(2) in the lemma can fail on non-trivial
algebras.  The same example illustrates that adding
trivial sorts can sometimes not be avoided.
 
}
\end{rems}

To  summarise, 
we bring together, and label for future use, 
conditions which ensure joint surjectivity holds
and which will feature in our main theorems.

\begin{enumerate}
\item[(G)] The set
 $G\subseteq
\bigcup \{\, \CA(\M,\M')\mid \M, \M' \in \CM \,\}$ 
is such that 
$\text{\rm Sep}_{\CM,G, \Omega}$  
is satisfied.

\item[(S1)] There exist a sort 
$\M_1\in\CM$ which has a  
$1$-element subalgebra $\{d_1\}\subseteq \M_1$ and $\w \in \Omega_{\M_1}$ such that $\w(d_1)=1$. 

\item[(S0)] there exist a sort 
$\M_0\in\CM$ which has a  
$1$-element subalgebra $\{d_0\}\subseteq \M_0$ and $\w \in \Omega_{\M_0
}$ such that $\w(d_0)=0$.
\end{enumerate}

 The statement of Theorem~\ref{thm:multpig} 
will incorporate the set $G$
and the $1$-element subalgebras into the alter ego.
Their preservation by $\CX$-morphisms will play
a central role in the proof of the theorem.

%

%



This is an opportune point at which to contrast the approach in this paper with that we used in
\cites{CPsug,CPmult}.  It has long been known that, for quasivarieties of lattice-based 
algebras in particular, dualities can be upgraded to strong dualities  by enriching the alter
ego by adding suitable partial operations;  see \cite{CD98}*{Section~7.1} and \cite{CPmult}*{Section~2}.
In the latter paper, this resulted, \textit{inter alia}, in the $1$-element subalgebras of the sorts 
being included in the alter ego as nullary operations.  In this paper we avoid working with
operations and partial operations, except for the operations that appear in (G), and (S1) and
(S0) will reveal very clearly the role of $1$-element subalgebras  in multisorted piggybacking over 
$\CCD_u$ and in the process of reconciliation that follows from it.


\section{Implementing the piggyback strategy}\label{sec:multpig}

The hard work has now been done. We are armed with
conditions for joint surjectivity to feed in to the statement of our
multisorted piggyback theorem with base variety $\CCD_u$.   Assumptions (A1)--(A3) remain in force,
and (S1) and (S0) are also required.
 As proposed in Section~\ref{sec:pigprelim},
the alter ego 
will  take the form  $\CMT := (N; G, R\cup S, \Tp)$.
With one exception we have assembled the 
 assumptions we need to impose on~$\CMT$.
We have not yet introduced  
 the binary piggyback relations which
we shall include.
For $\w \in \Omega_{\M}$ and 
$\w' \in \Omega_{\M'}$, define
$  R_{\w,\w'}$
to be
the set of relations which are universes of  maximal subalgebras of the lattice 
$$
(\w,\w')^{-1} (\leqslant) := \{\, (a,b) 
\in \M \times \M' \mid \w(a) \leqslant \w'(b)\,\}.
$$
We do not claim that $R_{\w,\w'} $ is always non-empty or, when it is, that it contains a single
element.  For further comments, see 
Section~\ref{sec:exs}.


\begin{thm}  
[Multisorted Piggyback Duality Theorem, over $\CCD_u$]\label{thm:multpig}
 Assume 
$\CA = \ISP(\CM)$, $\fntU$, and $\Omega =
\bigcup_{\M \in \CM} \Omega_{\M}$ are as in
\text{\rm (A1)--(A3)}.  

Let the alter ego $\CMT = (N; G, R\cup S, \Tp)$
be constructed as follows.

\begin{enumerate}
\item[{\rm (U)}]
The universe $N$ of $\CMT $ is 
$\dotbigcup \{\, M \mid \M \in \CM\,\}$, 
the disjoint(ified) union of the universes of the members of~$\CM$; 

\item[{\rm (G)}] the set
 $G\subseteq
\bigcup \{\, \CA(\M,\M')\mid \M, \M' \in \CM \,\}$ 
is such that 
$\text{\rm Sep}_{\CM,G, \Omega}$  
is satisfied;

\item[{\rm (R)}]
the  set $R$  is $ \bigcup_{\w,\w'\in \Omega}  R_{\w,\w'}$, where $  R_{\w,\w'}$
is 
the set of all maximal subalgebras of 
$
(\w,\w')^{-1} (\leqslant)$;

\item[{\rm (T)}]   $N$ is 
equipped with 
the union topology~$\Tp$ obtained when  each~$M$ (for $\M \in \CM$) is  discretely topologised;
\end{enumerate}
and, assuming also that \text{\rm (S1)}  and
\text{\rm (S0)}  hold,
\begin{enumerate}
\item[{\rm (S)}]  the members of  $S$ are the universes  of  $1$-element subalgebras $\{d_1\}
$ and $\{d_0\}$ of sorts
$\M_1$ and $\M_0$, respectively.
\end{enumerate}
Then $\CMT$ yields a duality on $\CA$.
\end{thm}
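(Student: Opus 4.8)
The plan is to prove, for each fixed $\A\in\CA$, that the evaluation $\esubA\colon\A\to\E\D(\A)$ is surjective; since general theory already guarantees it is an embedding, surjectivity upgrades it to an isomorphism and thereby establishes the duality. Following Figure~\ref{fig:pig}(a), I would reduce everything to the construction of a single \emph{injective} map $\Delta\colon\E\D(\A)\to\fntK_u\fntH_u\fntU(\A)$ satisfying $\Delta\circ\esubA=k_{\fntU(\A)}$. Indeed, $k_{\fntU(\A)}$ is surjective and, because $\fntK_u\fntH_u\fntU(\A)\cong\fntU(\A)$ by the dual equivalence in Theorem~\ref{thm:DPu} applied to the \emph{whole} first dual, the commuting triangle then forces $\esubA$ to be onto by a purely set-theoretic argument. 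So the real content is the construction of $\Delta$ and the verification of its properties.

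To define $\Delta$, take $\alpha\in\E\D(\A)$, a sort-preserving, continuous, structure-preserving morphism from $\X:=\D(\A)$ to $\CMT$, and take $y\in\fntH_u\fntU(\A)$. Condition (G) with Lemma~\ref{jointsurj}, supplemented by (S1), (S0) with Lemma~\ref{lem:fullJS-triv} to cover the constant maps $\zero,\one$, yields $y=\w\circ x$ for some $\M\in\CM$, $\w\in\Omega_{\M}$ and $x\in\CA(\A,\M)$; I would then set $\Delta(\alpha)(y):=\w(\alpha_M(x))$ as forced by Figure~\ref{fig:pig}(b). The crux is well-definedness, and this is exactly where the relations in $R$ earn their place. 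If $\w\circ x=\w'\circ x'$ with $\w\in\Omega_{\M}$, $\w'\in\Omega_{\M'}$, then $\w(x(a))\leqslant\w'(x'(a))$ for all $a\in\A$, so the homomorphism $a\mapsto(x(a),x'(a))$ maps $\A$ into the sublattice $(\w,\w')^{-1}(\leqslant)$ of $\M\times\M'$; its image, being a subalgebra, sits inside some maximal subalgebra $r\in R_{\w,\w'}$. Hence $(x,x')$ lies in the lifted relation $r^{\X}$, and preservation of $r$ by $\alpha$ gives $(\alpha_M(x),\alpha_{M'}(x'))\in r\subseteq(\w,\w')^{-1}(\leqslant)$, i.e.\ $\w(\alpha_M(x))\leqslant\w'(\alpha_{M'}(x'))$; the symmetric inequality yields equality. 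The same relation-based argument shows $\Delta(\alpha)$ is order-preserving. For the bounds, (S0), (S1) supply $1$-element subalgebras $\{d_0\},\{d_1\}$ with carriers sending $d_0\mapsto 0$, $d_1\mapsto 1$; since $\alpha$ preserves the unary relations in $S$, the representatives of $\zero,\one$ are pinned to $d_0,d_1$, forcing $\Delta(\alpha)(\zero)=0$ and $\Delta(\alpha)(\one)=1$. Continuity of $\Delta(\alpha)$ follows from a finite-closed-cover (pasting) argument: each $\img\Phi_\w$ is a compact, hence closed, subset, these finitely many sets cover $\fntH_u\fntU(\A)$, and $\Delta(\alpha)\circ\Phi_\w=\w\circ\alpha_M$ is continuous. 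Thus $\Delta(\alpha)\in\fntK_u\fntH_u\fntU(\A)$.

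Commutativity $\Delta\circ\esubA=k_{\fntU(\A)}$ is then immediate: both maps send $a$ to the function $\w\circ x\mapsto\w(x(a))$. Injectivity of $\Delta$ is where $\text{Sep}_{\CM,G,\Omega}$, i.e.\ condition (G), is deployed. If $\Delta(\alpha)=\Delta(\beta)$, then $\w(\alpha_M(x))=\w(\beta_M(x))$ for every $\w\in\Omega_{\M}$ and $x\in\CA(\A,\M)$. Writing $a:=\alpha_M(x)$, $b:=\beta_M(x)$ and supposing $a\ne b$, condition (G) gives either a carrier in $\Omega_{\M}$ separating $a,b$ (an immediate contradiction) or some $u\in\CA(\M,\M')\cap G$ and $\w'\in\Omega_{\M'}$ with $\w'(u(a))\ne\w'(u(b))$. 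In the latter case, preservation of $u\in G$ by $\alpha$ and $\beta$ gives $\alpha_{M'}(u\circ x)=u(a)$ and $\beta_{M'}(u\circ x)=u(b)$, and applying $\Delta(\alpha)=\Delta(\beta)$ to $\w'\circ u\circ x$ forces $\w'(u(a))=\w'(u(b))$, again a contradiction. Hence $\alpha=\beta$, so $\Delta$ is injective, and the proof concludes.

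The main obstacle I anticipate is not the injectivity step, which transcribes the bounded argument faithfully, but the well-definedness of $\Delta$ together with its correct behaviour at the two constant carriers $\zero,\one$. This is precisely where the unbounded case departs from the bounded one: the bounds of the Priestley dual $\fntH_u\fntU(\A)$ need not lie in any $\img\Phi_\w$ and must be manufactured, with the \emph{right} values, by the unary relations $S$ arising from (S0) and (S1). Getting this bookkeeping right, so that $\Delta(\alpha)$ is genuinely a $\CP_u$-morphism preserving the distinguished nullary operations, is the delicate point flagged in Section~\ref{sec:JS}.
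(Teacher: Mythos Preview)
Your proposal is correct and follows essentially the same route as the paper's proof: both construct $\Delta$ via $\Delta(\alpha)(\w\circ x)=\w(\alpha_M(x))$, establish well-definedness and order-preservation through the piggyback relations in~$R$, handle the distinguished bounds via preservation of the unary relations in~$S$, and obtain injectivity of $\Delta$ from $\text{Sep}_{\CM,G,\Omega}$. The only cosmetic difference is that for continuity the paper defers to \cite{CD98}*{Theorem~7.2.1, proof of item~(5)}, whereas you give an explicit pasting argument over the finitely many closed images $\img\Phi_\w$; both are valid.
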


\begin{proof}  We refer to our
 discussion in Section~\ref{sec:pigprelim} and in particular to Figure~\ref{fig:pig}. 

Fix $\A \in \CA$ and
let $\alpha \colon  \D(\A)\to \CMT$ be an
$\CX$-morphism.
We must now  show that putting
$
\Delta(\alpha)(\w \circ x) := \w(\alpha_M(x)),
$
whenever $x \in \X_M$ and $\w \in \Omega_\M$,
gives a  \emph{well-defined} map.  
For this we can proceed as in the bounded case.  
The order $\leqslant$ on $\fntH_u \fntU (\A)$ is the pointwise lifting of the partial 
order in~$\two$ and so $=$ is $\leqslant  \, \cap\, \geqslant$.  
For each $\M,\M'\in\CM$, each 
$x\in \X_{M}$ and $x'\in \X_{M'
}$,
 and each $\w\in\Omega_{\M}$ and $\w'\in \Omega_{\M'}$,
\begin{align*} 
\Phi_\w (x) \leqslant \Phi_{\w'}(x')
 & \Longleftrightarrow \w\circ x\leqslant \w'\circ x' \text{ in } \fntH_u\fnt U(\A)\\
&\Longleftrightarrow   (\w\circ x)(a)\leqslant (\w'\circ 
x')(a) \text{ for each } a\in {\A}\\ 
& \Longleftrightarrow  \{\,(x(a),x'(a))\mid a\in 
\A\,\}\subseteq (\w,\w')^{-1}(\leqslant)\\ 
& \Longleftrightarrow  \{\,(x(a),x'(a))\mid a\in \A\, \}\subseteq r\text{ for some } r\in R_{\w,\w'}\\
& \Longleftrightarrow  (x,x') \in r^\X\text{ for some } r\in R_{\w,\w'}\\
&\Longrightarrow (\alpha_{\M}(x) , \alpha_{\M'}(x') )\in r\text{ for some } r\in R_{\w,\w'}\\
&  \Longrightarrow \w(\alpha_{\M}(x)) \leqslant
\w'( \alpha_{\M'}(x')). 
\end{align*}
This yields well-definedness and also proves that
$\Delta(\alpha )$ is order-preserving.
Joint surjectivity of
the maps $\Phi_\w$, for $\w \in \Omega$, has been engineered (by (G),
(S1) and (S0)).
It  implies that $\Delta(\alpha)$
is defined on the whole of $\fntH_u\fntU (\A)$.

 We want $\Delta(\alpha)$ to be a $\CP_u$-morphism.  
We  claim that 
$\Delta(\alpha)(\one)= 1$.
We can realise 
$
\one$ as $\w_1 \circ x_1$,
where $\M_1$ has the $1$-element subalgebra
$\{d_1\}$ and there exists $x_1 \in \CA(\A, \M_1)$
which is the constant map with image $\{d_1\}$.
By~(S), $\{d_1\}^\X $
  is preserved by 
$\alpha_{M_1}$.
  Now
$\w_1 \circ x_1 =
\one$.  Moreover
$$
\Delta(\alpha)(\one)(a)=\w_1 (\alpha_{M_1} (x_1)))(a) = \w_1(
\alpha_{M_1} (x_1(a)))=
\w_1(d_1) = 1,$$
 for all  $a \in \A.$
A similar argument proves that $\Delta(\alpha)(\zero)=0$.

Finally we need 
$\Delta (\alpha) $ to be continuous.   It suffices to prove that  $\Delta (\alpha)^{-1}(V)$ is closed whenever 
$V$ is closed in $\{ 0,1\}$.  
This is proved as in the bounded case \cite{CD98}*{Theorem~7.2.1, proof of item~(5)}. 
 Finally we  require $\Delta$ to be injective. 

Suppose $\alpha $ and $\beta$ are morphisms from $\D(\A) $ to $\CMT$ with $\alpha \ne \beta$.  Then there exists $\M \in \CM$ such that $\alpha_{M} \ne \beta_{M}$ and 
this implies there exists $x \in \D(\A)$ for which  $\alpha_{M}(x) \ne \beta_{M} (x)$.   
By (G) and the definition of $\text{\rm Sep}_{\CM,G, \Omega}$,
either there
exists $\w\in  \Omega_{\M}$ such that $\w (\alpha_{M}(x)) \ne \w (\beta_{M} (x))$ or there 
exists $\M' \in \CM$, $\w' \in \Omega_{\M'} $ and $u \in  \CA(\M,\M')\cap  G$ such that
$\w'(u ( \alpha_{M}(x))) \ne \w'(u ( \beta_{M}(x)))$.  Since $\alpha$ and $\beta$ preserve~$u$,
the latter implies $\w'( \alpha_{M}(u(x))) \ne \w'( \beta_{M}(u((x)))$. 
Hence $\Delta(\alpha) (\w \circ x) \ne \Delta(\beta) (\w \circ x)$ or 
 $\Delta(\alpha) (\w \circ u(x)) \ne \Delta(\beta) (\w \circ  u(x))$.  Either way,
$\Delta(\alpha)  
\ne \Delta (\beta)$.
\end{proof}

\begin{rems}[Varying the alter ego]
 \label{rems:newCMT}
{\rm
Now we have Theorem~\ref{thm:multpig} in place, we
comment briefly on minor variants of it.
 We may sometimes wish to use an alter ego
 which is not the `standard'
 one, but  for which the piggyback
strategy still goes through.
(Such  comments are equally relevant to the bounded setting, but have usually been made for individual
varieties as these have arisen.)

With $\CM$ and $\Omega$ fixed,  there may be scope to change the relational structure $(N; G,R\cup S)$.  In particular, $G \cup R\cup S$ may be enlarged
without destroying the duality.
We  included in $R$
the  \emph{maximal} subalgebras of sublattices
  $(\w,\w')^{-1}(\leqslant)$ and these suffice in the
proof of the duality theorem  because
the sorts are finite, 
so 
any subalgebra 
of this type  will be contained 
in some maximal one.
  As always with piggybacking, we could equally well have  put \emph{all}  subalgebras of $(\w,\w')^{-1}(\leqslant)$ into $\CMT$.  
 In the other direction,  
entailment techniques 
may allow redundant relations to be deleted from
$G \cup R \cup S$
\cite{CD98}*{Chapters~2 and~8}.  
}
\end{rems}

 \section{Reconciliation achieved}
\label{sec:RevEng}

In this section we shall show how to construct 
the Priestley dual  of the $\CCD_u$-reduct of each
algebra in a quasivariety $\CA=\ISP(\CM)$,  under
the same assumptions as  
in Theorem~\ref{thm:multpig}.  In particular we carry
forward our framework assumptions (A1)--(A3),  and 
also
(S1) and (S0), concerning  the existence of sorts having $1$-element subalgebras.  

Topology has played no active part in our arguments
so far.  In this section it comes to 
the fore, in the
proof of Theorem~\ref{thm:RevEng}.  
By contrast, the order-theoretic
proofs in this section are  elementary.
If one is interested only in studying finite algebras, a finite-level duality should suffice and topology can be suppressed. Our paper \cite{CPfree} illustrates the point.

We  first  set up some  additional notation.  This echoes  that in \cite{CPcop}*{Section~2}.
For a fixed algebra $\A \in \ISP(\CM)$  and ${\X}=\fnt D(\A)$, we form an ancillary structure
as follows. 
Let
$$  \textstyle  Y=\bigcup\{\, \X_{M}\times\Omega_{\M}\mid \M\in\CM\,\}.
$$
We equip $Y$ with 
 the binary relation  $\pc\, \,\subseteq Y^2$ defined by 
$$
    (x,\w)\pc (x',\w')\mbox{ if there exists } 
r\in R_{\w,\w'} \text{ such that } (x,x') \in r^{\X}.
$$
We shall see that $\pc$ is a pre-order.  We denote the equivalence relation
$\pc\, \cap \, \succcurlyeq $ by $\approx$ and
denote the equivalence class of $y \in Y$ by 
$[y]$.  
Assuming that $\pc$ is indeed a pre-order,  we obtain a well-defined quotient partial order
$\pcq$ on  $Y/{\approx}$ given, equivalently, by
\begin{align*}
[y] \pcq [y'] &\Longleftrightarrow u \pc u'  \text{ for all }  u \approx y \text{ and }  u' \approx y',\\
[y] \pcq [y'] &\Longleftrightarrow y \pc y'.
\end{align*}

We equip $Y$  
with the topology $\Tp_{Y}$ having as 
a base of open sets
$$
    \{\,U\times\{\w\}\mid  U\mbox{ open in } \X_{M}\mbox{ and }\w\in\Omega_{\M}\,\}.
$$
We denote by $\Tp$ 
the quotient topology on $Y/
{\approx}$ derived from
$\Tp_Y$.

   
We now let $Z:= Y/{\approx} $ and consider the quotient structure  $(Z; \pcq, \Tp)$.

Proposition~\ref{prop:Y} and
Theorem~\ref{thm:RevEng}  have the same 
assumptions as 
Theorem~\ref{thm:multpig} and employ the dualising
alter ego 
$\CMT$ specified there.  The results and
their  proofs are adaptations of the statement and proof of
\cite{CPcop}*{Theorem~{\rm 2.3}}.

\begin{prop}\label{prop:Y}  Assume that 
$\CA$ is as in 
 Theorem~{\rm\ref{thm:multpig}}  and that
$\CMT$ is  the dualising alter ego given there.
Fix $
\A \in\CA$. Let
$(Y;\pc, \Tp_Y)$ be  as defined above.
Then 
 $(\, Y;\Tp_Y)$ is compact 
and 
   the binary relation $\pc$
  is a pre-order.
\end{prop}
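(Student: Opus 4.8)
The plan is to reduce both assertions to facts already established, the pivotal observation being that $\pc$ is simply the pullback, along the carrier-evaluation map, of the partial order carried by the base dual $\fntH_u\fntU(\A)$. Define $\gamma\colon Y \to \fntH_u\fntU(\A)$ by $\gamma(x,\w) = \Phi_\w(x) = \w\circ x$, for $(x,\w)\in\X_M\times\Omega_\M$. The chain of equivalences displayed in the proof of Theorem~\ref{thm:multpig} establishes exactly that
$$
(x,\w)\pc(x',\w') \Longleftrightarrow \w\circ x \leqslant \w'\circ x' \text{ in } \fntH_u\fntU(\A),
$$
that is, $(x,\w)\pc(x',\w')$ holds if and only if $\gamma(x,\w)\leqslant\gamma(x',\w')$. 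Here one uses that $\{\,(x(a),x'(a))\mid a\in\A\,\}$ is the image of a homomorphism into $\M\times\M'$ and hence a subalgebra, that each sort is finite, and that $R_{\w,\w'}$ consists of the \emph{maximal} subalgebras of $(\w,\w')^{-1}(\leqslant)$, so that containment of this image in $(\w,\w')^{-1}(\leqslant)$ is equivalent to its containment in some $r\in R_{\w,\w'}$.

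Granting this identification, the pre-order property follows at once. The relation $\leqslant$ on $\fntH_u\fntU(\A)$ is the pointwise lifting of the order of $\two$ and so is a genuine partial order, in particular reflexive and transitive; and any relation pulled back from a reflexive, transitive relation inherits both properties. Concretely, $\gamma(y)\leqslant\gamma(y)$ yields $y\pc y$, while $\gamma(y)\leqslant\gamma(y')$ together with $\gamma(y')\leqslant\gamma(y'')$ yields $\gamma(y)\leqslant\gamma(y'')$, whence $y\pc y''$. Thus $\pc$ is a pre-order.

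For compactness I would unwind the definitions of $Y$ and $\Tp_Y$. Since the sorts are pairwise disjoint, $Y=\bigcup\{\,\X_M\times\Omega_\M\mid\M\in\CM\,\}$ is a disjoint union, and the prescribed base $\{\,U\times\{\w\}\mid U\text{ open in }\X_M,\ \w\in\Omega_\M\,\}$ presents $(Y;\Tp_Y)$ as the topological disjoint union of the clopen pieces $\X_M\times\{\w\}$, each homeomorphic to $\X_M$. By (A2) and (A3) the index set $\{\,(\M,\w)\mid\M\in\CM,\ \w\in\Omega_\M\,\}$ is finite, and each sort $\X_M=\CA(\A,\M)$ is compact, being a closed substructure of the power $M^A$ of the finite discrete space $M$. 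A finite disjoint union of compact spaces is compact, so $(Y;\Tp_Y)$ is compact. Neither step is delicate; the only point requiring genuine care is the first, namely verifying that the relational definition of $\pc$ via $R_{\w,\w'}$ coincides with the transported order $\leqslant$, and for this the biconditional portion of the display in the proof of Theorem~\ref{thm:multpig} can be invoked verbatim rather than reproved.
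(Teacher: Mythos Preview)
Your proposal is correct and follows essentially the same approach as the paper. Both arguments identify $\pc$ with the pullback of $\leqslant$ along the map $(x,\w)\mapsto\w\circ x$ by invoking the biconditional chain from the proof of Theorem~\ref{thm:multpig}, and both obtain compactness by recognising $Y$ as a finite disjoint union of the compact pieces $\X_M\times\{\w\}$; your version simply spells out a little more explicitly why each $\X_M$ is compact.
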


\begin{proof}  
The topology of $Y$
 coincides with that of the finite disjoint union of the product  spaces 
$\X_{M}\times\Omega_{\M}$,
 where $\X_{M}$ 
carries  the induced topology from $\X$ and $\Omega_{\M}$ the discrete topology. Then we use the fact 
 that $\Omega$ is finite.

Now we consider the order structure. 
Here we  need only to reinterpret  a piece of the proof of 
Theorem~\ref{thm:multpig}: the  order-preservation of the maps $\Delta(\alpha)$ can be 
seen as characterising~$\pc$.
Let $(x,\w)$ and $(x',\w')$ belong to~$Y$, and assume they are associated with sorts $\M$ and $\M'$, respectively.
Then
\begin{align*}
\w\circ x\leqslant \w'\circ x' \text{ in } \fntH_u\fnt U(\A) 
& \Longleftrightarrow  (x,x') \in r^\X\text{ for some } r\in R_{\w,\w'}\\
&
\Longleftrightarrow (x,\w) \pc  (x',\w') \mbox{ in } Y.
\end{align*}
It is straightforward to check that $\pc$ is reflexive and transitive, and therefore a pre-order. 
\end{proof}

\begin{thm}[Reconciliation Theorem]
\label{thm:RevEng}
Assume that 
$\CA$ is as in 
 Theorem~{\rm\ref{thm:multpig}} 
and that 
$\CMT$ is the dualising alter ego given there.
 Fix $
\A \in\CA$.  Let  $(Z; \pcq, \Tp)$ be the quotient
structure obtained from $(Y; \pc, \Tp_Y)$.

Define 
$\Psi \colon Z\to \fntH_u  \fnt U(\A) $ 
 by 
$
[(
x,\w)]\mapsto \Phi_{\w}(x)=\w\circ x$.

\begin{enumerate}
\item[{\rm (i)}]  $\Psi$
is well-defined and  establishes an 
order-homeomorphism between  $Z$ and $\fntH_u  \fnt U(\A) $, regarded as ordered topological spaces.
  \item[{\rm (ii)}]  Take $x_1$ and $\w_1$ such that
$\w_1 \circ x_1= \one$ and
$x_0$ and $\w_0$ such that $\w_0 \circ x_0 = \zero$.
Then the  quotient structure  
$(Z; \pcq, \Tp)$ enriched   with $z_1=[(x_1,\w_1)]$ and $z_0=[(x_0,\w_0)]$ 
is a doubly-pointed
Priestley space.
  Moreover, $\Psi$  is then  a $\CP_u$-isomorphism. 
\end{enumerate}
\end{thm}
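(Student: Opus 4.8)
The plan is to extract nearly all of statement~(i) from the order-characterisation already proved in Proposition~\ref{prop:Y}---that $\w\circ x\leqslant\w'\circ x'$ in $\fntH_u\fntU(\A)$ holds exactly when $(x,\w)\pc(x',\w')$ in $Y$---and to confine the genuinely topological work to the homeomorphism claim. First I would dispose of the order-theoretic content of~(i). Since $[(x,\w)]\pcq[(x',\w')]$ is by definition equivalent to $(x,\w)\pc(x',\w')$, Proposition~\ref{prop:Y} gives at once
\[
[(x,\w)]\pcq[(x',\w')]\iff\w\circ x\leqslant\w'\circ x'\iff\Psi([(x,\w)])\leqslant\Psi([(x',\w')]).
\]
Reading this with $\approx={\pc}\cap{\succcurlyeq}$ shows $\Psi$ is well defined on $\approx$-classes, and the displayed chain says precisely that $\Psi$ is an order-embedding; antisymmetry of $\pcq$ then yields injectivity. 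Surjectivity is nothing but the joint surjectivity engineered by (G), (S1) and (S0): every $y\in\fntH_u\fntU(\A)$ equals $\w\circ x$ for some sort $\M$, some $\w\in\Omega_\M$ and some $x\in\X_M$, whence $y=\Psi([(x,\w)])$. So $\Psi$ is an order-isomorphism of posets.

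For the homeomorphism I would invoke the standard fact that a continuous bijection from a compact space onto a Hausdorff space is a homeomorphism. The codomain $\fntH_u\fntU(\A)$ is a Priestley space, hence compact Hausdorff, while $Z=Y/{\approx}$ is compact as a quotient of the compact space $(Y;\Tp_Y)$ supplied by Proposition~\ref{prop:Y}; so it remains only to verify that $\Psi$ is continuous. Writing $q\colon Y\to Z$ for the quotient map, the universal property of the quotient topology reduces this to continuity of $\Psi\circ q\colon(x,\w)\mapsto\w\circ x$. On each summand $\X_M\times\Omega_\M$ of $Y$, with $\Omega_\M$ finite and discrete, this amounts for each fixed $\w$ to the map $x\mapsto\w\circ x$ from $\X_M\subseteq M^A$ into $\two^A$; since $(\w\circ x)(a)=\w(x(a))$ depends only on the $a$-th coordinate of $x$, this map is continuous coordinatewise, and continuity on all of $Y$ follows. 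Thus $\Psi$ is a continuous order-isomorphism from a compact space onto a Hausdorff one, hence an order-homeomorphism, which proves~(i).

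Part~(ii) then follows quickly. Representatives with $\w_1\circ x_1=\one$ and $\w_0\circ x_0=\zero$ exist by (S1), (S0) and Lemma~\ref{lem:fullJS-triv}; the injectivity of $\Psi$ from~(i) shows that $z_1=[(x_1,\w_1)]$ and $z_0=[(x_0,\w_0)]$ do not depend on the choice of representatives, so the two distinguished points are well defined. Transporting the Priestley structure of $\fntH_u\fntU(\A)$ across the order-homeomorphism $\Psi$ makes $(Z;\pcq,\Tp)$ a Priestley space, and the computations $\Psi(z_1)=\w_1\circ x_1=\one$ and $\Psi(z_0)=\w_0\circ x_0=\zero$ show that $\Psi$ sends the distinguished points of $Z$ to the distinguished bounds of $\fntH_u\fntU(\A)$. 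An order-homeomorphism that preserves the two nullary operations is exactly a $\CP_u$-isomorphism, so $\Psi$ is one.

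The main obstacle is the continuity step: the order half of the argument is immediate from Proposition~\ref{prop:Y}, but the quotient topology must be handled with care---routing continuity through the universal property of $q$ and through the coordinatewise continuity of post-composition---after which the compact-to-Hausdorff principle lets us avoid checking continuity of $\Psi^{-1}$ directly.
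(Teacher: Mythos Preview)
Your proof is correct and follows essentially the same route as the paper's: the order-theoretic content of~(i) is read off from the characterisation in Proposition~\ref{prop:Y}, surjectivity comes from joint surjectivity, the homeomorphism is obtained by the compact-to-Hausdorff principle with continuity checked via the quotient map and the coordinatewise description of $(x,\w)\mapsto\w\circ x$, and~(ii) is deduced by transporting structure along~$\Psi$. The only minor difference is presentational---the paper writes out the subbasic preimages explicitly while you invoke the universal property of the product topology---but the underlying argument is the same.
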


\begin{proof}  Consider (i).  
From the proof of Proposition~\ref{prop:Y} we see that
$\Psi \colon Z\to \fntH_u \fnt U(\A) $ 
is well-defined and is  an order embedding.
 By joint surjectivity it is an order isomorphism and  consequently a bijection.  

It remains to show that $\Psi$ is a homeomorphism.  Since
$Z$ is compact and $\fntH_u\fntU(\A)$ is Hausdorff, 
it suffices to show that $\Psi$ is continuous.  
By definition of the quotient topology,  $\Psi$
 is continuous if and only if 
the map  $(x, \w) \mapsto \w \circ x$  from $(Y;\Tp_Y)$ to $\fntH_u\fntU(\A)$ is continuous.
We  now prove this.

An introductory comment may be helpful.  We shall
be working with  maps which are elements of 
spaces of 
functions from one algebra to another.   Such function spaces inherit their topology from the
power in which they sit and the topology is determined  by the topology put on the universe of 
the base.  This universe will always be either  $M$ (for some sort $\M$) or $\{ 0,1\}$, and carry the
discrete topology.  It follows from the definition
of product topology that the maps we consider
are necessarily continuous.
In particular
let $x \in \X_M = \CA(\A, \M)$ and $\w \in \Omega_\M$.
Then  
  $x$
and~$\w$ are continuous, and hence $\w \circ x \colon
\A \to \twoT$ is continuous.

By definition of the topology in $\fntH_u\fntU(\A)$, we need to prove that for each $a\in \A$ and $\delta\in\{0,1\}
 $, the set $\{\, (x,\w)\mid \w(x(a))=\delta\, \}$ is open.
  For each $a \in \A$, and each $\M\in \CM$, 
let $\pi^{\M}_a \colon u \mapsto u(a)$ be the 
$a^{\text{th}}$ coordinate
projection from  $\M^A \to \M$. Then $\pi^\M_a$ is continuous. 
For  $x\in \X_{M}$ and $\delta \in \{ 0,1\}$, 
\begin{align*}
\w(x(a))=(\w \circ x ) (a) = \delta &\Longleftrightarrow
x(a) \in \w^{-1}(\delta)\\
&\Longleftrightarrow
\pi^\M_a(x) \in  \w^{-1}(\delta)\\
&\Longleftrightarrow
x  \in (\pi^{\M}_a)^{-1}(w^{-1}(\delta)).
\end{align*}
Finally,
\begin{align*}
\{\,(x,\w)\mid \ & \w(x(a))=\delta\,\}\\
&={\textstyle\bigcup_{\M \in \CM}}
\{ \, (x,\w) \mid  x \in \X_M, \ \w \in \Omega_{\M},  \
(\w \circ x)(a) = \delta \, \}\\
&=
{\textstyle\bigcup_{\M \in \CM}}
\{ \, (x,\w) \mid   \w \in \Omega_{\M},  \
x  \in (\pi^\M_a)^{-1}(w^{-1}(\delta))\, \} \\
&={\textstyle\bigcup_{\M \in \CM}} \
{\textstyle\bigcup_{\w \in \Omega_\M}}\
(\pi^\M_a)^{-1}(w^{-1}(\delta))\times\{\w\}.
\end{align*}
This set
is open because each $(\pi^\M_a)^{-1}(w^{-1}(\delta))$ is open. 
This completes the proof of~(i).

To  prove (ii), observe that
Theorem~\ref{thm:DPu} implies that
$\fntH_u \fntU(\A)$ is a doubly-pointed 
Priestley space, with $\one$ and~$\zero$
as its top and bottom  elements, respectively.
By (i),   $\Psi\colon  Z \to \fntH_u \fntU(\A)$ is an order-homeomorphism.  Therefore
$(Z; \pcq, \Tp)$ is a Priestley space and must
possess universal bounds for its order. The
statement of~(ii) identifies  these bounds:
$(Z; \pcq, z_1, z_0, \Tp) \in \CP_u$.
(Note how (S1) and (S0)  have been brought into play.)
\end{proof}

\begin{rems}  \label{rems:postRevEng}
{\rm 

In  applications of Theorem~\ref{thm:RevEng} we are interested in identifying the pre-order~$\pc$ on~$Y$ and the associated partial order~$\pcq$ on~$Z$.  Finding  $\pc$ from the members of the sets $R_{\w,w'}$ is not made more complicated
when these sets are not singletons (see Remarks~\ref{rems:exs} for comments on when this does and does not happen).  Indeed it may
 be an advantage to work with \emph{all} piggyback relations, not just maximal ones (recall Remarks~\ref{rems:newCMT}), and deliberately to look at small, and hence simple, relations. Then  $\pc$ may be pieced together from the information so obtained by taking the transitive closure.

With the insight we get from Theorem~\ref{thm:RevEng} we can make some 
further comments on reconciliation.  The theorem 
obtains $\fntH_u \fntU(\A)$ as a quotient of the structure~$Y$, in which the roles of the sorts and the
carriers, and the sorts of the natural first dual, can be 
clearly seen.  

As an example,
consider 
a situation in which
we start from a quasivariety $\ISP(\M)$ where 
$\M$ has a reduct in $\CCD_u$.  Take
$\Omega := \fntH_u \fntU(\M)$ and  one copy 
$\M_\w$ of
$\M$ for each non-constant $\w\in \Omega$ and let $\CM = \{ \M_\w\}_{\w \in \Omega}$. Then
$\text{Sep}_{\CM, \Omega}$
 is
guaranteed  to hold. 
  This may be viewed
as  a brute-force approach, with the likelihood of 
much collapsing at the quotienting stage.
(It could be necessary also to include $1$-element
sorts to allow for the constant maps and satisfy
(S1) and (S0).)
See Example~\ref{ex:sug} for an illustration.

There are circumstances in which it may be advantageous  to approach reconciliation from
both directions. 
If, as might be the case, we already
have a restricted Priestley duality for a class~$\CA$, we will
know what the quotient must look like. This may assist
us  
in optimising the choice of sorts for a natural duality.  
See the examples in Section~\ref{sec:exs} for illustrations.  

}
\end{rems}

\section{Examples} \label{sec:exs}

Here we show how our  results in 
Sections~\ref{sec:multpig} and~\ref{sec:RevEng} apply to
various classes of algebras.

\begin{ex}[Kleene lattices]  \label{ex:Klat}
{\rm 
The variety $\Klat$ of Kleene lattices is the 
unbounded analogue of the variety $\Kalg$ of Kleene algebras.  The latter class has been exhaustively studied
within natural duality theory since its inception
and provided the original motivation for the introduction
of multisorted dualities.  Kleene lattices have attracted 
less attention, but have recently come to prominence 
through the study of models for many-valued logics and in
particular the development of the theory of varieties of 
Sugihara type,  whose algebras have reducts in $\Klat$.  
In \cite{FG19}, Fussner and Galatos establish a single-sorted
strong duality for $\Klat$.  In \cite{CPmult}*{Section~6}
dualities for $\Kalg$ were summarised, and a two-sorted
strong duality for $\Klat$ was outlined, but without proof.
Here we provide a justification for a duality based on
Theorem~\ref{thm:multpig}.

 We note  that $\Klat = \ISP(\boldsymbol 3) = \HSP(\boldsymbol 3)$, where $\boldsymbol 3$ is the three-element chain in $\CCD_u$ with universe $\{0,a,1\}$ and $0 < a< 1$
equipped with negation  $\neg$ given by $\neg 0=1$,
$\neg a = a$ and $\neg 1 = 0$.  We treat 
$\Klat$ as $\ISP(\CM)$, where
$\CM= 
\{ \boldsymbol 3^-, \boldsymbol 3^+\}$ and each sort is a copy of $\boldsymbol 3$.  We use a single non-constant carrier map for 
each sort:  $a$ is sent to $1$ by $\alpha^-$ and  to~$0$   by
$\alpha^+$.  
The separation condition  
$\text{Sep}_{\CM, \Omega}$ is satisfied.
Each sort has a $1$-element subalgebra,  $\{a\}$,
and hence (S1) and (S0) are satisfied.

The dualising alter ego for $\Klat$ supplied 
by Theorem~\ref{thm:multpig}  contains the $1$-element  subalgebra
in each sort as a unary relation.  
The  set $G$ can be taken
to contain the identity maps from  $\three^-$ to
$\three^+$ and from $\three^+$ to  $\three^-$.
The alter ego  also has four piggyback
relations which arise as maximal subalgebras of
$(\w,\w')^{-1}(\leqslant)$, one for each of the possible choices of $\w$ and $\w'$; these  are as in the bounded case.
Theorem~\ref{thm:RevEng} now applies.  
The translation
from this natural duality to a Priestley-style duality for
$\fntU(\Klat)$
operates in the way we would expect, and  is illustrated in \cite{CPmult}*{Section~6}.  

Here we have an example, akin to that for Kleene algebras, in which there is a very tight relationship
between our $2$-sorted natural duality and the
easy adaptation to the unbounded case of traditional
Cornish--Fowler duality for Kleene algebras, 
whereby  $\Klat$ is dually equivalent to doubly-pointed Kleene spaces.  
Having a restricted Priestley duality already to hand  is 
valuable in two ways.  First of all, we know what the
first duals of the lattice reducts of our algebras look
like, order-theoretically.  This guides us to favour
a $2$-sorted duality over an equivalent   piggyback
duality with one sort and two carriers.  	In addition
we can upgrade the quotient structures supplied
by  Theorem~\ref{thm:RevEng}  to doubly-pointed 
Kleene spaces, thereby obtaining  dual  representations for the members of $\Klat$ and
not just those in $\fntU (\Klat)$.  For a full 
account of the corresponding results for Kleene
algebras, see \cite{DP87}*{Theorem~3.8}.
}
\end{ex}

\begin{ex}[Sugihara algebras and monoids]
\label{ex:sug}
{\rm  

 Building on our work in
\cite{CPsug, CPmult}  on (strong) dualities for
finitely generated quasivarieties and varieties
of Sugihara algebras and Sugihara monoids, 
we have moved on to investigate free algebras  \cite{CPfree}.
For this we need  Theorem~\ref{thm:RevEng},
and so too Theorem~\ref{thm:multpig}.  This has led
to the present paper.

 We refer to the cited papers for a
full introduction to Sugihara algebras and for proofs
of the claims we make below.   Here we recall only that we are interested in classes
$\SA_{2n+1} = \HSP(\Zed_{2n+1}) = \ISP(\Zed_{2n+1})$  (the odd case)  and also
$\HSP(\Zed_{2n}) = \ISP(\Zed_{2n}, \Zed_{2n-1})$  (the even case).  For $k$ odd or even 
the algebra $\Zed_k$ has a $\CCD_u$-reduct which is a sublattice of the chain of the lattice
of integers  $\mathbb Z$ which is equipped with its usual lattice operations.  The universe
of $\Zed_{2n+1}$ is the interval $[-n,n]:=\{\, i \in \mathbb Z \mid -n \leqslant i \leqslant n\, \}$
and that of $\Zed_{2n} $ is $[-n,n] \setminus \{0 \}$.  Each $\Zed_k$ also carries operations
$\neg$ (negation) and $\to$ (implication).  These are defined by restriction of operations on
$\mathbb Z$.  The negation is given by $a \mapsto -a$.  The formula for implication does not concern us here.

We consider dualities for
$\HSP(\Zed_{2n+1})$, 
for each $n \geqslant 1$,   and 
$\HSP(\Zed_{2n})$ for each $n \geqslant 2$.  
(We have omitted the trivial variety in 
 the odd case and a variety 
term-equivalent to Boolean algebras in the even case.)  
We wish to apply Theorem~\ref{thm:multpig}, with
our focus on ways in which  this differs from 
\cite{CPmult}*{Theorem~2.1},  whence we obtained
strong dualities in \cite{CPmult}*{Section~4}. This means that we concentrate
on the  
refined version of joint surjectivity given in Section~\ref{sec:JS}.

For $\HSP(\Zed_{2n+1})=\ISP(\Zed_{2n}
)$, we employ two sorts, denoted $\P^-$ and $\P^+$, which are
disjoint 
copies of $\Zed_{2n+1}$. Each has a single carrier map, where, 
            respectively, $\alpha^-(a) = 1 $ if and only if $a \geqslant 0$ and 
 $\alpha^+(a) = 1 $ if and only if $a \geqslant 1$.

For $\A \in \CA = \ISP(\Zed_{2n+1})$, the maps from $\A$ into the sorts
separate the points of~$\A$. 
Observe that $\{0\}$ is a $1$-element subalgebra of both $\P^-$ and $\P^+$ and that  $\alpha^-(0)=1$ and $\alpha^+(0)=0$. Hence  (S1) and (S0) hold.

We now consider the even case.  We first review what
happens for $\ISP(\Zed_{2n})$.  
In \cite{CPsug}*{Theorem~6.4}
 we presented a single-sorted 
 strong duality for this quasivariety, 
 with 
all the homomorphisms from $\fntU(\Zed_
{2n})$ as carriers.  With a small tweak this can be recast as a multisorted
duality (not claimed to be strong) which comes within
the scope of~Theorem~\ref{thm:multpig}.
We let $\CM$ contain
$2n$ sorts, each with a single carrier.
Two of the sorts are $1$-element algebras and the
remainder are copies of $\Zed_{2n}$, each with a 
different non-constant $\w$ as carrier. 
This ensures, in brute-force fashion,
that restricted joint surjectivity holds.   
Our $1$-element sorts artificially engineer that
the conditions of Lemma~\ref{lem:fullJS-triv} are met.
The proof of the lemma relied  on a trivial  algebra to witness the failure of joint surjectivity when condition~(2) in that lemma is not satisfied, even when $\text{Sep}_{\CM, \Omega}$ is. 
However $\ISP(\Zed_{2n})$  gives us the opportunity to demonstrate that joint surjectivity can fail also for non-trivial algebras.  We consider 
$\Zed_{2n}$ for which the only endomorphism 
is the identity map.  
For each non-constant $\w$, necessarily
$\w \circ \id_{\Zed_{2n}}=\w\neq \one$.  
 
In general, 
reducing the sets of sorts or
carriers may
 be thwarted because members of $\CM$ are not closed
under homomorphic images,  and this is exemplified by
\cite{CPsug}*{Theorem~6.4}. We may however be able to achieve a simpler duality by considering 
$\HSP(\CM)$ rather than $\ISP(\CM)$.
If, as in \cite{CPfree}, our interest is in free algebras, we have nothing to lose and
 much  to gain from  this change of perspective.

In \cite{CPmult}*{Theorem~4.8} we set up  a
$3$-sorted duality for 
$$
\HSP(\Zed_{2n}) = \ISP(\Zed_{2n}, \Zed_{2n-1}),
$$
 with sorts $\P^-$, $\P^+$
isomorphic to $\Zed_{2n-1}$ and~$\Q$ isomorphic to~$\Zed_{2n}$.  For the sorts of odd size, the carrier 
maps are defined as in the odd case.
The sort $\Q$ has a single carrier map $\beta$, with $\beta(a) = 1$ if and only if
$a > 0$.   Then
  $\text{Sep}_{\{\P^-, \P^+, \Q\}, \Omega}$ holds.
The sort $\Q$ has no $1$-element subalgebra but
 we can exploit the
existence of $1$-element subalgebras in $\P^-$ and $\P^+$ to show
(S1) and (S0) hold.  
The duality for $\HSP(\Zed_{2n})$ leads to a more
transparent application of Theorem~\ref{thm:RevEng}
than does the duality for $\ISP(\Zed_{2n})$.


Sugihara monoids exhibit the same features as we
have noted above for Sugihara algebras 
 and we give no details here.
 We refer the reader to \cites{CPmult,FG19}
for the definitions. 
In \cite{CPmult}*{Theorem~5.2} we set up
a $2$-sorted duality for each finitely generated 
quasivariety of odd Sugihara monoids, with  a 
single carrier map  for each sort, as for Sugihara
algebras.  As in that scenario, there is a $1$-element subalgebra,  $\{ 0\}$.  We could likewise adapt
our treatment in \cite{CPmult}*{Section~5} of the even case to fit the theorems
in this paper.

}
\end{ex}


\begin{ex}[Unbounded distributive bilattices]
\label{ex:dbilat}
{\rm
Our paper \cite{DBlat}  considered dualities 
for distributive bilattices, with the emphasis on the bounded case.  We refer the reader to \cite{DBlat} for all definitions.

 We comment briefly  on the relationship
between our duality  in Section~\ref{sec:multpig} 
 and  the duality presented for
the variety  $\class
{DB}_u$ of unbounded 
distributive bilattices in \cite{DBlat}*{Theorem~3.2 and Section~5}.

  The treatment
in \cite{DBlat}, like that in \cite{CPmult} for classes of
Sugihara type, focuses on strong dualities and is based on the NU Strong Duality Theorem.  In the case of $\class{DB}_u$, therefore, the proof relies on
the single-sorted version of \cite{CPmult}*{Theorem~2.1}.

We can now see that Theorems~\ref{thm:multpig} 
and~\ref{thm:RevEng}, in single-sorted form,  apply to 
$\class{DB}_u$,
 making use of the calculations
performed in \cite{DBlat}*{Section~5} for the identification of
the piggyback relations to be used.  At the end
of \cite{DBlat}*{Section~5} only minimal comments
are made about how to transition from the natural
duality setting to the Priestley-style duality for 
the $\CCD_u$ reducts. A full justification is provided 
by Theorem~\ref{thm:RevEng}.  

In \cite{DBlat}*{Section~6} (in the bounded case) we went to some lengths
to upgrade our duality so as to obtain a fully-fledged
restricted Priestley duality for $\class{DB}$ 
tied to our quotienting construction.   Corresponding results
can be expected for $\class{DB}_u$.

Similar remarks can be made about unbounded 
distributive pre-bilattices, for which strong multisorted dualities are discussed in \cite{DBlat}*{Section~10}.

}
\end{ex}

\begin{rems}\label{rems:exs}
{\rm
We make a few concluding remarks on what our examples have revealed. 

First of all we  note that only in the Kleene lattices 
case do we get sets $R_{\w,\w'}$ of maximal 
piggyback relations with just a single element.  This 
phenomenon
should be seen as the exception rather than the norm.  It occurs when the non-constant, non-lattice
operations are endomorphisms or dual endomorphisms, and  for some mild generalisations of this.

For non-lattice operations of arity greater than~$1$, a plethora of
maximal subalgebras may exist for any given pair of carriers.   This happens in particular when a
(non-classical) implication is present.
For Sugihara
algebras and Sugihara monoids,  for example, the
members of  a set $R_{\w,\w}$ 
of 
 piggyback relations, modulo converses, are  certain
 graphs
of endomorphisms and of non-extendable partial
endomorphisms (see \cite{CPsug}*{Sections~4 and~6}
and \cite{CPmult}*{Section~5}).

It will not go unnoticed by anyone with an interest
in algebras of Sugihara type that we have not 
discussed upgrading the quotient structures we get from Theorem~\ref{thm:RevEng} by using these
structures to host a restricted Priestley duality taking account of the implication.  This is not a straightforward matter, as one may surmise from
\cite{FG19}*{Section~2}.  Capturing the Kleene negation on the other hand is a triviality.

}
\end{rems}


\section{The case of one bound}\label{sec:onebound}
We have focused our paper on piggybacking over 
$\CCD_u$, distributive lattices without bounds.  Here
we note  the corresponding results when the 
base variety $\CCD_u$
is replaced by $\CCD^1$, distributive lattices with a top element included in the language as a constant~$1$.  The results are needed as part of our
on-going 
 structural analysis of Sugihara
algebras and monoids and it is expedient to record 
them in this paper.

We state the  analogue of Theorem~\ref{thm:DPu} in
abbreviated form. 

\begin{thm}[Priestley duality for $\CCD^1$] \label{thm:DPu1}

There exists a 
dual equivalence between $\CCD^1$ and the category $\CP^1$ of upper-pointed Priestley spaces. This is 
given by hom-functors  into $(\{ 0,1 \}; \land, \lor, 1)$  and the alter ego  $(\{ 0,1\}; \leqslant, 
\boldsymbol{1},\Tp)$.  

Moreover, a
$\CP^1$-morphism is surjective if and only if its
dual is injective.
\end{thm}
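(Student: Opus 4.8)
The plan is to mimic the direct argument indicated for the final assertion of Theorem~\ref{thm:DPu}, bypassing strongness entirely and relying only on the dual equivalence already supplied by the first part of Theorem~\ref{thm:DPu1} together with Priestley separation. Write $\fnt H^1$ and $\fnt K^1$ for the hom-functors of this duality, so that $\fnt K^1(\varphi)= -\circ\varphi$ for a $\CP^1$-morphism $\varphi\colon\Z\to\Y$. One implication is a set-theoretic triviality: if $\varphi$ is surjective and $g_1\circ\varphi=g_2\circ\varphi$ for $g_1,g_2\in\fnt K^1(\Y)$, then $g_1$ and $g_2$ agree on $\varphi(Z)=Y$, whence $g_1=g_2$, so $\fnt K^1(\varphi)$ is injective. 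Neither the order nor the topology plays any role here.

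For the converse I would argue the contrapositive. Suppose $\varphi$ is not surjective. Since $\Z$ is compact and $\Y$ is Hausdorff, the image $W:=\varphi(Z)$ is closed in $Y$; it is a substructure because $\varphi$ preserves the distinguished top point, so $W$ carries the structure of an upper-pointed Priestley space and $W\subsetneq Y$. Factoring $\varphi$ through the inclusion $\iota\colon W\hookrightarrow\Y$ of this substructure, contravariance gives that $\fnt K^1(\varphi)$ factors through $\fnt K^1(\iota)$, so it suffices to show that $\fnt K^1(\iota)$ is \emph{not} injective. Now $\fnt K^1(\iota)$ is the restriction map $g\mapsto g|_{W}$, which the standard Priestley extension lemma (a routine compactness-and-separation argument) shows to be surjective. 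Were it also injective it would be a bijective, hence invertible, $\CCD^1$-morphism; since $\fnt K^1$ is one half of a dual equivalence and equivalences reflect isomorphisms, $\iota$ would then be a $\CP^1$-isomorphism, forcing $W=Y$, a contradiction.

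More concretely---and this is useful when an explicit witnessing pair is wanted---I would instead fix $y_0\in Y\setminus W$ and produce two distinct $\CP^1$-morphisms into $(\{0,1\};\leqslant,\boldsymbol 1,\Tp)$ which agree on $W$ but take different values at $y_0$, by using Priestley separation to find two clopen up-sets, each containing the top point, with the same trace on $W$ and differing at $y_0$. The main obstacle lies precisely here, and it is where the one-bound case diverges from the symmetric treatment available for $\CCD_u$: because only the top is a distinguished constant, every admissible map must send the top to $1$, so the two maps can never be told apart by the constant map to~$0$. Consequently the clean ``toggle $y_0$ by a clopen down-set disjoint from $W$'' move succeeds immediately only when no point of $W$ lies below $y_0$, and in general a short case analysis, or a judicious choice of the witness $y_0$, is needed. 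For this reason I expect the factorisation argument of the previous paragraph to be the more economical route, since it replaces the case analysis by the two structural facts that the restriction map is surjective and that a dual equivalence reflects isomorphisms.
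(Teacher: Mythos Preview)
The paper does not supply a proof of this theorem: it is stated ``in abbreviated form'' as the $\CCD^1$-analogue of Theorem~\ref{thm:DPu}, which in turn is referenced to \cite{CD98}*{Subsection~1.2.5}, with the comment that the final assertion follows from strongness but that ``it is easy to construct a direct proof which bypasses the notion of strongness''. Your argument is precisely such a direct proof, and it is correct. The factorisation route---reducing to the inclusion $\iota\colon W\hookrightarrow\Y$, using the Priestley extension lemma to see that the restriction map $\fnt K^1(\iota)$ is surjective, and then invoking that a dual equivalence reflects isomorphisms---cleanly sidesteps the case analysis you correctly flag in the explicit-witness approach. The one caveat is that the extension lemma and full faithfulness of $\fnt K^1$ are themselves facts of roughly the same depth as the assertion being proved, so the argument is economical rather than strictly more elementary; but this is entirely in keeping with the paper's treatment of the result as background.
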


Thereafter we  modify  our framework assumptions
so
 that $\CA$ now has a forgetful functor~$\fntU$  
into~$\CCD^1$ and adapt 
Sections~\ref{sec:pigprelim}--\ref{sec:RevEng} 
for the new scenario 
by
deleting all references to
 the distinguished role that was played by~$0$. In
 particular, the constant map $\zero$ does not arise.
 Assumption (S0) is omitted and in the alter ego
 condition
(S)  now takes~$S$ to contain only
$\{d_1\}$,  as supplied by (S1).  No new arguments
are needed in the proofs of the $\CCD^1$-variants
of Theorems~\ref{thm:multpig} and~\ref{thm:RevEng}.  We have already seen that
(S1) and (S0)  operate
independently and in like fashion. To align with
this section we  gave proofs for the case that
(S1) is present. 

.

By reversing the roles of $1$ and $0$ 
corresponding statements hold  when the case variety is  the class
 of distributive lattices with a lower bound
which is included in the language.


\end{document}